\newtheorem{theorem}{Theorem}
\newtheorem{lemma}[theorem]{Lemma}
\newtheorem{definition}[theorem]{Definition}
\newtheorem{example}[theorem]{Example}
\newtheorem{corollary}[theorem]{Corollary}
\newtheorem{proposition}[theorem]{Proposition}
\title{Patterns of ideals of numerical semigroups}
\author{Klara Stokes\\
School of Engineering Sciences\\
University of Sk\"ovde, Box 408\\
54128 Sk\"ovde Sweden\\
klara.stokes@his.se}
\date{}
\begin{document}
\maketitle
\begin{abstract}
This article introduces patterns of ideals of numerical semigroups, thereby unifying previous definitions of patterns of numerical semigroups. Several results of general interest are proved. More precisely, this article presents results on the structure of the image of patterns of ideals, and also on the structure of the sets of patterns admitted by a given ideal. 
\end{abstract}
\section{Introduction}


A \emph{numerical semigroup} $S$ is a subset of the non-negative integers (denoted by $\mathbb{Z}_+$) that contains zero, is closed under addition and has finite complement in $\mathbb{Z}_+$. The set of non-zero elements in $S$ is denoted by $M(S)$. 

Elements in the complement $\mathbb{Z}_+\setminus S$ are called \emph{gaps}, and the number of gaps is the \emph{genus} of $S$. 
The smallest element of $M(S)$ is the \emph{multiplicity} of $S$ and it is denoted by $m(S)$. 
The largest integer not in $S$ is the \emph{Frobenius element} and is denoted by $F(S)$. The number $F(S)+1$ is called the \emph{conductor} of $S$ and is denoted by $c(S)$. 
An integer $x\not\in S$ is \emph{pseudo-Frobenius} if $x+s\in S$ for all $s\in M(S)$. 
The set of pseudo-Frobenius integers is denoted by $PF(S)$. Note that $F(S)\in PF(S)$ and that $F(S)$ is the maximum of the elements in $PF(S)$.  

It can be proved that, given a numerical semigroup $S$, there exists a unique minimal set of elements $B\subset M(S)$ such that any element in $S$ can be expressed as a linear combination of elements from $B$.  The elements in $B$ are called \emph{minimal generators} of $S$ and they are exactly the elements of $M(S)$ that can not be obtained as the sum of two elements of $M(S)$. The cardinality of $B$ is always finite. More precisely it is always less or equal to the multiplicity of $S$. 
A numerical semigroup has \emph{ maximal embedding dimension} if the number of minimal generators equals the multiplicity. 

A \emph{relative ideal} of a numerical semigroup $S$ is a set $H\subseteq \mathbb{Z}$ satisfying $H+S\subseteq H$ and $H+d\subseteq S$ for some $d\in S$. 
A relative ideal contained in $S$ is an \emph{ideal} of $S$. 
An ideal is \emph{proper} if it is distinct from $S$. 
The set of proper ideals of $S$ has a maximal element with respect to inclusion. 
This ideal is called the \emph{maximal ideal} of $S$, and equals $M(S)$, the set of non-zero elements of $S$.  
The \emph{dual} of a relative ideal $H$ is the relative ideal $H^*=(S-H)=\{z\in \mathbb{Z}:z+H\subseteq S\}$. 

A \emph{pattern} admitted by an ideal $I$ of a numerical semigroup $S$ is a multivariate polynomial function $p(X_1,\dots,X_n)$ which returns an element $p(s_1,\dots,s_n)\in S$ when evaluated on any non-increasing sequence $(s_1,\dots,s_n)$ of elements in $I$. We say that the ideal $I$ \emph{admits} the pattern. 
If $I=S$, then we say that the numerical semigroup $S$ admits the pattern. 
Note that a pattern admitted by an ideal $I$ of a numerical semigroup $S$ is also admitted by any ideal $J\subseteq I$. 

We will identify the pattern with its polynomial, and, for example, say that the pattern is linear and homogeneous, when the pattern polynomial is linear and homogeneous. 
The length of a pattern is the number of indeterminates and its degree is the degree of the pattern polynomial. 
One pattern $p$ \emph{induces} another pattern $q$ if any ideal of a numerical semigroup that admits $p$ also admits $q$. 
Two patterns are equivalent if they induce each other. 
If any ideal satisfying a given condition $c$ that admits $p$ also admits $q$, then we say that $p$ induces $q$ under the condition $c$. 
Two patterns are equivalent under the condition $c$ if they induce each other under the condition $c$. 



Homogeneous linear patterns admitted by numerical semigroups were introduced by Bras-Amor\'os and Garc\'ia-S\'anchez in \cite{MariaPedro}. 
The patterns that they considered were all defined by homogeneous linear multivariate polynomials with the whole numerical semigroup as domain. 
Examples of homogeneous patterns are the homogeneous linear patterns with positive coefficients.  
It is easy to see that these patterns are admitted by any numerical semigroup. 
Arf numerical semigroup are characterized by admitting the homogeneous linear ``Arf pattern'' $X_1+X_2-X_3$. 
Homogeneous linear patterns of the form $X_1+\cdots+X_{k}-X_{k+1}$ generalise the Arf pattern and are called \emph{subtraction patterns}~\cite{MariaPedro}. 

The definition of pattern from $S$ to $S$ does not allow for non-homogeneous patterns with constant term outside $S$. 
To overcome this problem, when the non-homogeneous patterns were introduced in \cite{MariaPedroAlbert}, it was with $M(S)$ as domain. 
Note that with this definition  $X+a$ with $a\in PF(S)$ is a non-homogeneous linear pattern admitted by $S$.  
Admitting the non-homogeneous linear pattern $X_1+X_2-m(S)$ is equivalent to the property of maximal embedding dimension. 
Since this pattern is induced by the pattern $X_1+X_2-X_3$, this implies that a numerical semigroup that is Arf is always of maximal embedding dimension. 

Further examples of non-homogeneous linear patterns can be found in the numerical semigroups associated to the existence of combinatorial configurations (see \cite{BrasStokes}). It was proved in \cite{StokesBras1,StokesBras2} that such numerical semigroups admit the patterns $X_1+X_2-n$ for $n\in \{1,\dots,\gcd(r,k)\}$ and $X_1+\dots+X_{rk/\gcd(r,k)}+1$, where $r$ and $k$ are positive integers that depend on the parameters of the combinatorial configuration. This example motivates the study of a set of patterns that are admitted simultaneously by the same numerical semigroup. 

Another example of a non-homogeneous linear pattern is  $qX_1-qm(S)$, which is admitted by a Weierstrass semigroup $S$ of multiplicity $m(S)$  of a rational place of a function field over a finite field of cardinality $q$, for which the Geil-Matsumoto bound and the Lewittes' bound coincide~\cite{BrasVico}.
Similarly, the pattern $(q-1)X_1-(q-1)m(S)$ is admitted if and only if the Beelen-Ruano's bound equals $1+(q-1)m$~\cite{MariaPedroAlbert}.  

Patterns can be used to explore the properties of the numerical semigroup admitting them. For example, the calculations of the formulae for the notable elements of Mersenne numerical semigroups in ~\cite{Branco} rely on the fact that all Mersenne numerical semigroups generated by a consecutive sequence of Mersenne numbers admit the non-homogeneous pattern $2X_1+1$. 
Similarly, the non-homogeneous patterns admitted by numerical semigroups associated to the existence of combinatorial configurations were used to improve the bounds on the conductor of these numerical semigroups. 

In this article we study patterns of ideals of numerical semigroups. 

Section~\ref{sec:image} contains basic results about the properties of the image of patterns. For example, it is proved that if the greatest common divisor of the coefficients of the pattern $p$ is one and $I$ is an ideal of a numerical semigroup $S$, then the image $p(I)$ of a pattern is always an ideal of a numerical semigroup. Also, sufficient conditions are given for when $p(I)\subseteq S$. 

Section~\ref{sec:calculate} presents an upper bound of the smallest element $c$ in $p(I)$ such that all integers larger than $c$ belong to $p(I)$, under the condition that the greatest common divisor of the coefficients of $p$ is one. By dividing $p$ by the greatest common divisor of its coefficients, this result makes it possible to calculate $p(I)$ for any admissible pattern $p$. 

Section~\ref{sec:patternsofideals} introduces the concepts endopattern and surjective pattern of an ideal, and gives some sufficient and necessary conditions on patterns to have these properties. 

In Section~\ref{sec:closure}, we generalize the notion of closure of a numerical semigroup with respect to a homogeneous linear pattern to the closure of an ideal of a numerical semigroup with respect to a non-homogeneous linear pattern. We also prove a necessary condition for when the closure of an ideal with respect to a non-homogeneous pattern can be calculated by repeatedly applying the pattern. 

In Section~\ref{sec:structures} we prove that the set of patterns admitted by an ideal of a numerical semigroup has the structure of a semigroup, semiring or semiring algebra, depending on if the length and the degree of the patterns is fixed. 

Section~\ref{sec:pseudofrobenius} introduces a generalization of pseudo-Frobenius as a useful tool in the analysis of the structures defined in Section~\ref{sec:structures}.

Section~\ref{sec:image2} introduces infinite chains of ideals of numerical semigroups where the subsequent ideal $I_{i}$ is the image of the preceding ideal $I_{i-1}$ under a pattern $p$ which is admitted by the first pattern in the chain, and hence by them all.

Finally, Section~\ref{sec:composition} defines polynomial composition of patterns, hence providing yet another operation that creates a pattern admitted by an ideal from several patterns admitted by that ideal.

\section{The image of a pattern}
\label{sec:image}
A pattern $p$ admitted by an ideal $I$ of a numerical semigroup $S$ returns elements in $S$ when evaluated over the non-increasing sequences of elements of $I$. 
We will now study the image $p(I)$ of $I$ under $p$. 

We will need the following well-known result. 
\begin{lemma}
\label{lemma_fincompl}
Let $A\subseteq \mathbb{Z}_+$ be closed under addition. 
Then $A$ does not have finite complement in $\mathbb{Z}_+$ if and only if $A\subseteq u\mathbb{Z}$ for some $u>1$. 
\end{lemma}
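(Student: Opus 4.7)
The plan is to prove the easy direction directly and the non-trivial direction by contrapositive, reducing it to the classical Sylvester--Frobenius fact that finitely many coprime positive integers generate, via nonnegative combinations, all sufficiently large integers.

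For the implication $(\Leftarrow)$, suppose $A\subseteq u\mathbb{Z}$ for some $u>1$. Then every positive integer not divisible by $u$ lies in $\mathbb{Z}_+\setminus A$, and this set is clearly infinite, so the complement is not finite.

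For the implication $(\Rightarrow)$ I would argue the contrapositive: assume that there is no $u>1$ with $A\subseteq u\mathbb{Z}$, and show that $\mathbb{Z}_+\setminus A$ is finite. The assumption means that $\gcd(A)=1$. First I would extract a finite subset $\{a_1,\dots,a_k\}\subseteq A$ whose gcd is also $1$; this is a standard step, since the sequence $d_i=\gcd(a_1,\dots,a_i)$ obtained by adding one element of $A$ at a time is a non-increasing sequence of positive integers, hence stabilizes, and its stable value must equal $\gcd(A)=1$ (otherwise, $\gcd(A)$ would be strictly larger than $1$).

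Now I would invoke the classical fact that if $a_1,\dots,a_k$ are positive integers with $\gcd(a_1,\dots,a_k)=1$, then there exists $N\in \mathbb{Z}_+$ such that every integer $n\ge N$ can be written as a nonnegative integer combination $n=c_1a_1+\cdots+c_ka_k$ with $\sum c_i\ge 1$. Since $A$ is closed under addition and contains each $a_i$, every such combination lies in $A$. Hence every $n\ge N$ belongs to $A$, so $\mathbb{Z}_+\setminus A\subseteq \{0,1,\dots,N-1\}$ is finite, completing the contrapositive.

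The only subtle step is the reduction from $\gcd(A)=1$ to a finite coprime subfamily; once that is in place, everything else is either elementary or a direct appeal to the standard Frobenius-type result. I would expect the main obstacle, if any, to be whether the paper prefers to cite this Frobenius fact or to spell it out, since the statement is advertised as well-known.
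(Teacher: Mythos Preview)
Your proof is correct and follows essentially the same strategy as the paper: the easy direction is identical, and for the hard direction both argue the contrapositive and reduce to the classical Frobenius fact that coprime generators yield all sufficiently large integers. The only cosmetic difference is that the paper extracts two coprime elements $x,y\in A$ and invokes the two-generator numerical semigroup $\langle x,y\rangle$, whereas you extract a finite coprime subfamily $a_1,\dots,a_k$ and invoke the $k$-generator version; your reduction step is arguably more transparent, since the paper's claim ``no two elements coprime $\Rightarrow\gcd(A)>1$'' tacitly uses closure under addition (it fails for general sets, e.g.\ $\{6,10,15\}$).
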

\begin{proof} 
If $A$ does not have finite complement, then $A$ does not contain $x,y$ such that $\gcd(x,y)=1$, since otherwise the maximal ideal of the numerical semigroup $\langle x,y\rangle$, which has finite complement,  would be contained in $ A$. 
Therefore $\gcd(A)=u$ for some $u>1$ so that $A\subseteq u\mathbb{Z}$. 

If $u\mathbb{Z}$ is an ideal of $\mathbb{Z}$ with $u>1$ and $A\subseteq u\mathbb{Z}$, then clearly $|\mathbb{Z}_+\setminus A|$ is infinite. 
\end{proof}

\begin{lemma}
\label{lem:conductor}
If $I$ is an ideal of some numerical semigroup $S$, then there is a $c\in I$ such that $z\in I$ for all $z\in \mathbb{Z}$ with $z\geq c$. 
\end{lemma}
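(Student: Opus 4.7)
The plan is to exploit two ingredients: the fact that the ambient semigroup $S$ has finite complement in $\mathbb{Z}_+$ (so it has a conductor $c(S)$ past which every integer lies in $S$), and the defining absorption property of an ideal, $I+S\subseteq I$. Combined, these will force $I$ to contain a full tail $\{c,c+1,c+2,\dots\}$ of $\mathbb{Z}$, after which the desired element $c\in I$ is obtained by minimality.

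First, since $I$ is a (tacitly non-empty) ideal of $S$, pick any $a\in I$. For any integer $z$ with $z\geq a+c(S)$ we have $z-a\geq c(S)$, and because every integer $\geq c(S)$ lies in $S$, we get $z-a\in S$. The ideal condition $a+S\subseteq I$ then gives $z=a+(z-a)\in I$. Hence the set
\[
T=\{n\in\mathbb{Z}:\text{every integer }z\geq n\text{ belongs to }I\}
\]
is non-empty, since it contains $a+c(S)$.

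Next, $T$ is clearly bounded below (any element of $\mathbb{Z}_+\setminus I$ is a lower bound, as is any gap of $S$ that exceeds some element of $I$; more plainly, $T$ cannot contain arbitrarily small integers because then $I$ would equal $\mathbb{Z}$). Let $c=\min T$. By the very definition of $T$, every $z\geq c$ is in $I$; in particular $c\in I$, which is exactly what the lemma claims.

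The argument is essentially a direct unfolding of definitions, so there is no real obstacle; the only subtlety worth flagging is that the statement implicitly assumes $I$ is non-empty, which is the standard convention for ideals of numerical semigroups and is needed in order to pick the initial element $a$.
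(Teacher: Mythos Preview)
Your proof is correct and follows essentially the same approach as the paper: both use the absorption property $I+S\subseteq I$ together with the finiteness of $\mathbb{Z}_+\setminus S$ to conclude that $I$ contains a full tail of $\mathbb{Z}_+$. The paper's proof is simply a terser version of yours, asserting directly that $I+S\subseteq I$ implies $|\mathbb{Z}_+\setminus I|<\infty$, whereas you spell out the mechanism via $a+c(S)$; your explicit minimisation step to obtain $c\in I$ is also a nice touch that the paper leaves implicit.
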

\begin{proof}
If $I$ is an ideal of some numerical semigroup $S$, then $I+S\subseteq I$, implying that $|\mathbb{Z}_+\setminus I|<\infty$. Therefore there is a $c\in I$ such that $z\in I$ for all $z\in \mathbb{Z}$ with $z\geq c$. 
\end{proof}
If, in Lemma \ref{lem:conductor}, $I=S$, then the integer $c$ is the conductor of $S$. If $I$ is a proper ideal, then we call $c$ the \emph{maximum of the small elements of $I$. }

\begin{theorem}
\label{thmhomideals}
Let $p(X_1,\dots,X_n)=a_1X_1+\cdots+a_nX_n$ be a homogeneous linear pattern admitted by $\mathbb{Z}_+$ and let $I$ be an ideal of a numerical semigroup $S$. 
Then $p(I)$ is an ideal of some numerical semigroup if and only if $\gcd(a_1,\dots,a_n)=1$. 
\end{theorem}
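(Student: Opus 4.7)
The plan is to reduce the statement to Lemma~\ref{lemma_fincompl} applied to the additive submonoid $p(I)\cup\{0\}$ of $\mathbb{Z}_+$. As a preliminary, $p(I)\subseteq\mathbb{Z}_+$ because $p$ is admitted by $\mathbb{Z}_+$; moreover $p(I)$ is closed under addition, since for two non-increasing tuples $(s_1,\dots,s_n)$ and $(t_1,\dots,t_n)$ in $I^n$ the componentwise sum is again non-increasing, lies in $I^n$ (using $I+S\subseteq I$ together with $I\subseteq S$), and linearity of $p$ yields $p(s_1,\dots,s_n)+p(t_1,\dots,t_n)=p(s_1+t_1,\dots,s_n+t_n)\in p(I)$. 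Hence $p(I)\cup\{0\}$ is a subsemigroup of $\mathbb{Z}_+$ to which Lemma~\ref{lemma_fincompl} applies.

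The key computation is then $\gcd(p(I))=\gcd(a_1,\dots,a_n)$. One divisibility is immediate. For the other, I would use Lemma~\ref{lem:conductor} to pick $s\in I$ large enough that $s,s+1,s+2,\dots\in I$, and consider the tuples $(s,\dots,s)$, $(s+1,s,\dots,s)$, $(s+1,s+1,s,\dots,s)$, \dots, $(s+1,\dots,s+1)$: each is non-increasing and lies in $I^n$, and subtracting $p(s,\dots,s)=(a_1+\cdots+a_n)s$ from the evaluation at the $k$th tuple produces exactly the partial sum $a_1+\cdots+a_k$ as a $\mathbb{Z}$-combination of two elements of $p(I)$. A telescoping argument then shows that $\gcd(p(I))$ divides every individual $a_i$.

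The two implications follow. For $(\Leftarrow)$, if $\gcd(a_1,\dots,a_n)=1$ then Lemma~\ref{lemma_fincompl} forces $p(I)\cup\{0\}$ to have finite complement in $\mathbb{Z}_+$, so $T:=p(I)\cup\{0\}$ is a numerical semigroup, and closure of $p(I)$ under addition gives $p(I)+T\subseteq p(I)$, which exhibits $p(I)$ as an ideal of $T$. For $(\Rightarrow)$, if $d:=\gcd(a_1,\dots,a_n)>1$ then $p(I)\subseteq d\mathbb{Z}$; for any putative numerical semigroup $T$ with $p(I)$ as an ideal, fixing $y\in p(I)$ gives $y+T\subseteq p(I)\subseteq d\mathbb{Z}$, hence $T\subseteq d\mathbb{Z}$, contradicting the fact that $T$ has finite complement in $\mathbb{Z}_+$.

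The main technical step I expect is the gcd calculation: one has to be mindful that individual $a_i$ may be negative (only the partial sums $a_1+\cdots+a_k$ are guaranteed non-negative by admissibility), but working throughout with differences of $p$-evaluations keeps the computation inside $\mathbb{Z}$ and sidesteps sign issues entirely.
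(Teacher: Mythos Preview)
Your proof is correct and follows essentially the same architecture as the paper's: both show $p(I)$ is closed under addition via linearity and componentwise sums, invoke Lemma~\ref{lem:conductor} to access a tail $\{s,s+1,\dots\}\subseteq I$, and reduce the finite-complement question to Lemma~\ref{lemma_fincompl}. The only notable difference is in the gcd step: the paper, for each $u>1$, exhibits a single element of $p(I)\setminus u\mathbb{Z}$ using a tuple of the form $(s+u,\dots,s+u,s+1,s,\dots,s)$, whereas you compute $\gcd(p(I))=\gcd(a_1,\dots,a_n)$ outright by evaluating at the staircase tuples $(s+1,\dots,s+1,s,\dots,s)$ and telescoping the resulting partial sums --- a slightly stronger intermediate statement that handles both directions at once.
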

\begin{proof}
Assume that $\gcd(a_1,\dots,a_n)=1$ and let $c$ be the maximum of the small elements of $I$ (see Lemma \ref{lem:conductor}).  
Let $u>1$ and $s\in I\cap u\mathbb{Z}$ with $s\geq c$. Then $s+1,s+u\in I$, with $s+1\not\in u\mathbb{Z}$, $s+u\in u\mathbb{Z}$ and $s+u>s+1>s$. 
Since $\gcd(a_1,\dots,a_n)=1$ there is an $i\in [1,n]$ such that $a_i$ is not a multiple of $u$. 
Therefore  $\sum_{j=1}^{i-1}a_j(s+u)+a_i(s+1)+\sum_{j=i+1}^{n}a_js \in p(I)\setminus u\mathbb{Z}$. 
Lemma~\ref{lemma_fincompl} implies that $p(I)$ has finite complement in $\mathbb{Z}_+$. 

Note that if  $x_1,\dots,x_n$ and $y_1,\dots,y_n$ are non-increasing sequences of $I$, then so is $x_1+y_1,\dots,x_n+y_n$. 
Since the pattern $p$ is linear and homogeneous we have $p(x_1,\dots,x_n)+p(y_1,\dots,y_n)=p(x_1+y_1,\dots,x_n+y_n)\in p(I)$ for all non-increasing sequences $x_1,\dots,x_n\in I$ and $y_1,\dots,y_n\in I$, that is, $a+b\in p(I)$ for all $a,b\in p(I)$.  Hence $p(I)$ is closed under addition. (That linearity of $p$ implies that $p(I)$ is closed under addition was first noted in \cite{MariaPedro}.)

Together, the above imply that if $0\in p(I)$, then $p(I)$ is a numerical semigroup, and if $0\not\in p(I)$, then $p(I)$ is the maximal ideal of the numerical semigroup $p(I)\cup \{0\}$. In any case, $p(I)$ is an ideal of a numerical semigroup. 

Now assume that $\gcd(a_1,\dots,a_n)=u>1$. Then clearly $p(I)\subseteq u\mathbb{Z}$, so that $p(I)$ does not have finite complement in $\mathbb{Z}_+$ and can not be the ideal of any numerical semigroup.
\end{proof}
Note that in Theorem~\ref{thmhomideals}, either $p(I)$ is a numerical semigroup, or $p(I)$ is the maximal ideal of the numerical semigroup $p(I)\cup \{0\}$, depending on whether $0\in p(I)$ or not. 
When $I$ is a proper ideal, then $0\in p(I)$ exactly when $\sum_{i=1}^na_i=0$ (see Proposition~\ref{propendM}). 
When $I=S$, then Theorem~\ref{thmhomideals} implies the following result.
\begin{corollary}
\label{lemmahomsem}
Let $p(X_1,\dots,X_n)=a_1X_1+\cdots+a_nX_n$ be a homogeneous linear pattern admitted by $\mathbb{Z}_+$ and let $S$ be a numerical semigroup.  
Then $p(S)$ is a numerical semigroup if and only if $\gcd(a_1,\dots,a_n)=1$.
\end{corollary}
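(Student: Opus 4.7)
The plan is to deduce this corollary directly from Theorem~\ref{thmhomideals} by applying it to the special case $I = S$, with one small additional observation about why $p(S)$ contains $0$.

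First I would observe that since $S$ is a numerical semigroup, $0 \in S$, and hence the constant sequence $(0,0,\dots,0)$ is a non-increasing sequence of elements of $S$. Because $p$ is homogeneous linear, evaluating on this sequence yields $p(0,\dots,0) = 0 \in p(S)$. This is the key difference from the general ideal case: for a proper ideal $I$ we might have $0 \notin I$, but for $I = S$ we always have $0 \in p(S)$.

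For the forward direction, assume $\gcd(a_1,\dots,a_n) = 1$. Theorem~\ref{thmhomideals} (applied with $I = S$) gives that $p(S)$ is an ideal of some numerical semigroup, and the remark immediately following Theorem~\ref{thmhomideals} says that whether $p(S)$ is itself a numerical semigroup or only the maximal ideal of $p(S) \cup \{0\}$ depends on whether $0 \in p(S)$. Since we have just shown $0 \in p(S)$, we conclude $p(S)$ is a numerical semigroup.

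For the converse, I would repeat the short argument from the end of Theorem~\ref{thmhomideals}: if $u = \gcd(a_1,\dots,a_n) > 1$, then every value $p(s_1,\dots,s_n) = \sum a_i s_i$ lies in $u\mathbb{Z}$, so $p(S) \subseteq u\mathbb{Z}$ has infinite complement in $\mathbb{Z}_+$ and cannot be a numerical semigroup. There is no real obstacle here; the only thing to be careful about is to note explicitly that the sequence $(0,\dots,0)$ is non-increasing in $S$ so that the forward implication uses Theorem~\ref{thmhomideals} exactly in the $0 \in p(I)$ case.
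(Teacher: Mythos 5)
Your proof is correct and takes essentially the same route as the paper: apply Theorem~\ref{thmhomideals} with $I=S$ and note $p(0,\dots,0)=0\in p(S)$, so $p(S)$ is a numerical semigroup rather than merely the maximal ideal of one. Your separate re-derivation of the converse is already contained in the ``if and only if'' of Theorem~\ref{thmhomideals} and is therefore redundant, but harmless.
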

\begin{proof}
Apply Theorem~\ref{thmhomideals} with $I=S$ and note that $p(0,\dots,0)=0\in p(S)$. 
\end{proof}
Clearly the numerical semigroup $p(S)$ is contained in the original numerical semigroup $S$ if and only if $p$ is admitted by $S$.  


Following \cite{MariaPedro}, a linear homogeneous pattern $p(X_1,\dots,X_n)=\sum_{i=1}^na_iX_i$ is \emph{premonic} if $\sum_{i=1}^{n'}a_i=1$ for some $n'\leq n$. We say that a linear non-homogeneous pattern $p(X_1,\dots,X_n)=\sum_{i=1}^na_iX_i+a_0$ is premonic if $p(X_1,\dots,X_n)-a_0$ is premonic. If $a_1=1$ then $p$ is \emph{monic} and so all monic patterns are premonic.  

\begin{lemma}
\label{lemmapremonic}
If $p$ is a premonic linear homogeneous pattern admitted by a numerical semigroup $S$, then $p(S)=S$.  
\end{lemma}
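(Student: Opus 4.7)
The plan is to prove both inclusions $p(S)\subseteq S$ and $S\subseteq p(S)$. The first inclusion is immediate: since $p$ is admitted by $S$, by definition every evaluation of $p$ on a non-increasing sequence from $S$ lies in $S$, so $p(S)\subseteq S$.

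For the reverse inclusion $S\subseteq p(S)$, I would exploit the premonic hypothesis directly. Let $n'\leq n$ be the index with $\sum_{i=1}^{n'}a_i=1$. Given any $s\in S$, consider the sequence that takes the value $s$ in its first $n'$ entries and the value $0$ in its remaining $n-n'$ entries. Since $S$ is a numerical semigroup we have $0\in S$ and $s\in S$, and since $s\geq 0$ this is a non-increasing sequence in $S$, hence an admissible input for $p$. Evaluating gives
\[
p(s,\dots,s,0,\dots,0)=\Bigl(\sum_{i=1}^{n'}a_i\Bigr)s+\Bigl(\sum_{i=n'+1}^{n}a_i\Bigr)\cdot 0 = s,
\]
so $s\in p(S)$.

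There is essentially no obstacle here: the entire argument is the observation that the premonic condition is tailored precisely to make the constant-then-zero test sequence evaluate to the input. The only mild subtlety is remembering that $0\in S$, so that padding the sequence with zeros stays inside $S$ (and that the resulting tuple is non-increasing); both are immediate from $S$ being a numerical semigroup and from $s\geq 0$. Combining the two inclusions yields $p(S)=S$.
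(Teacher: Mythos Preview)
Your proof is correct and follows essentially the same approach as the paper: both argue $p(S)\subseteq S$ by definition of admitting a pattern, and $S\subseteq p(S)$ by evaluating $p$ on the sequence $(s,\dots,s,0,\dots,0)$ with $s$ in the first $n'$ positions. Your version is slightly more explicit in checking that this sequence is non-increasing and lies in $S$, but the argument is identical.
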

\begin{proof}
If $p$ is a linear homogeneous pattern admitted by $S$ and $p$ is premonic, then $\sum_{i=1}^{n'}a_i s + \sum_{j=n'+1}^na_j 0 =s$ for all $s\in S$, so that $S\subseteq p(S)$. 
Clearly $p(S)\subseteq S$, so that $p(S)=S$. 
\end{proof}


Moreover, the image of a premonic linear pattern, homogeneous or not, admitted by an ideal $I$ of a numerical semigroup $S$, is an ideal of $S$.
 
\begin{lemma}
\label{lem:5}
Let $p$ be a premonic linear pattern admitted by an ideal $I$ of a numerical semigroup $S$. Then $p(I)$ is an ideal of $S$. 
\end{lemma}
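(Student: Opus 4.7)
The statement asks us to show both inclusions required of an ideal: first that $p(I)\subseteq S$, and second that $p(I)+S\subseteq p(I)$. (The remaining defining condition, $p(I)+d\subseteq S$ for some $d\in S$, is then immediate by taking $d=0$.) The first inclusion is essentially free: by hypothesis $p$ is admitted by $I$, so every value $p(x_1,\dots,x_n)$ with $(x_1,\dots,x_n)$ a non-increasing sequence in $I$ lies in $S$. So the content of the lemma is entirely in the closure condition $p(I)+S\subseteq p(I)$.

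The plan for that closure is to use the premonic structure directly. Write $p(X_1,\dots,X_n)=\sum_{i=1}^n a_i X_i + a_0$ with $\sum_{i=1}^{n'}a_i=1$ for some $n'\leq n$. Given a non-increasing sequence $x_1\geq \cdots \geq x_n$ in $I$ and an arbitrary $s\in S$, I would construct a new non-increasing sequence in $I$ by shifting only the first $n'$ coordinates: set $y_i=x_i+s$ for $i\leq n'$ and $y_i=x_i$ for $i>n'$. Since $I+S\subseteq I$, each $y_i$ lies in $I$. A direct computation using $\sum_{i=1}^{n'}a_i=1$ gives $p(y_1,\dots,y_n)=p(x_1,\dots,x_n)+s$, so $p(x_1,\dots,x_n)+s\in p(I)$, as required.

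The only real thing to check is that $(y_1,\dots,y_n)$ is still non-increasing. Within the first block this is immediate because all entries are shifted by the same $s$, and within the second block the original inequalities persist. The potentially delicate step is the junction $y_{n'}\geq y_{n'+1}$, i.e.\ $x_{n'}+s\geq x_{n'+1}$; but this holds since $x_{n'}\geq x_{n'+1}$ and $s\geq 0$. This monotonicity check at the seam is the only thing the argument really hinges on, and it is the reason one shifts the \emph{head} of the sequence (the premonic prefix) rather than the tail. Once this is in place, combining $p(I)\subseteq S$ with the closure $p(I)+S\subseteq p(I)$ shows that $p(I)$ is a relative ideal of $S$ contained in $S$, hence an ideal of $S$.
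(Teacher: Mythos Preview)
Your proof is correct and follows essentially the same route as the paper: both use the premonic index $n'$ to shift only the first $n'$ coordinates by $s$, invoke $I+S\subseteq I$ to stay in $I$, and use $\sum_{i=1}^{n'}a_i=1$ to identify $p(y)=p(x)+s$. Your explicit check of monotonicity at the junction $y_{n'}\geq y_{n'+1}$ is a detail the paper asserts without spelling out.
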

\begin{proof}
Clearly, if $p$ is a pattern admitted by $I$ then $p(I)\subseteq S$. 
If $p(X_1,\dots,X_n)=\sum_{i=1}^na_iX_i+a_0$ is a premonic pattern then $\sum_{i=1}^{n'}a_i=1$ for some $1\leq n'\leq n$.  
If $s_1,\dots,s_n$ is a non-increasing sequence of elements from $I$ and $s\in S$ then $s_1+s,\dots,s_{n'}+s,s_{n'+1},\dots,s_n$ is a non-increasing sequence of elements from $I$ for any $1\leq n'\leq n$. 
We have $p(s_1,\dots,s_n)+s=\sum_{i=1}^{n}a_is_i+a_0+s=\sum_{i=1}^{n}a_is_1+a_0+(\sum_{i=1}^{n'}a_i)s=\sum_{i=1}^{n'}a_i(s_i+s)+\sum_{i=n'+1}^na_is_i+a_0=p(s_1+s,\dots,s_{n'}+s,s_{n'+1},\dots,s_{n})\in p(I)$ for all non-increasing sequences $s_1,\dots,s_n$ of elements from $I$ and for all $s\in S$.
Therefore $p(I)+S\subseteq p(I)$, and $p(I)$ is an ideal of $S$. 
\end{proof}


Note that if $p(X_1,\dots,X_n)=\sum_{i=1}^na_iX_i$ is a premonic linear pattern admitted by the maximal ideal $M(S)$ of a numerical semigroups $S$, then $\gcd(a_1,\dots,a_n)=1$, so that, by Theorem~\ref{thmhomideals}, $p(M(S))$ is either a numerical semigroup contained in $S$ or the maximal ideal of a numerical semigroup contained in $S$. 
But although an ideal of $S$ that contains zero must be equal to $S$, it is not true in general that if $p$ is a premonic, homogeneous linear pattern admitted by $M(S)$, then $p(M(S))\cup\{0\}=S$. 
Consider for example the pattern $X_1+X_2$ with image $2M(S) \subsetneq M(S)$. 
However, as we have already seen, if $p$ is a premonic linear homogeneous pattern admitted by a numerical semigroup $S$, then $p(S)=S$. 
\begin{example}
The Arf pattern $p(X_1,X_2,X_3)=X_1+X_2-X_3$ is a monic linear homogeneous pattern. If $S$ is a numerical semigroup Arf,  
then $p(S)=S$, and since $p(0,0,0)=0$ and $p^{-1}(0)=(0,0,0)$ also $p(M(S))=M(S)$. 
If $I$ is an ideal of $S$, then $I\subseteq p(I)$, but in general it is not true that  $p(I)\subseteq I$.  
For example, if $S=\left\langle 3,5,7\right\rangle$ and $I=S\setminus\{0,7\}$, then $p(5,5,3)=7\in p(I)\setminus I$ and $p(I)=M(S)$. 
\end{example}


Finally, we show that there is a relation between relative ideals and premonic linear non-homogeneous patterns. 
\begin{lemma}
Let $p(X_1,\dots,X_n)=a_1X_1+\cdots+a_nX_n+a_0$ be a linear non-homogeneous pattern admitted by an ideal $I$ of a numerical semigroup $S$ and let $q(X_1,\dots,X_n)=p(X_1,\dots,X_n)-a_0$ be the homogeneous linear part of $p$. 
If $p$ is premonic (and therefore also $q$), then $q(I)$ is a relative ideal of $S$. 
\end{lemma}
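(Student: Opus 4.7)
The plan is to reduce the claim to Lemma~\ref{lem:5} by exploiting the obvious translation relation between $p(I)$ and $q(I)$. Since for every non-increasing sequence $(s_1,\dots,s_n)$ in $I$ we have $q(s_1,\dots,s_n)=p(s_1,\dots,s_n)-a_0$, the images satisfy $q(I)=p(I)-a_0$ as subsets of $\mathbb{Z}$. So the problem is to check the two defining conditions for a relative ideal of $S$, namely $q(I)+S\subseteq q(I)$ and $q(I)+d\subseteq S$ for some $d\in S$.

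Because $p$ is premonic and admitted by $I$, Lemma~\ref{lem:5} gives that $p(I)$ is an ideal of $S$; in particular $p(I)+S\subseteq p(I)$ and $p(I)\subseteq S$. The first relative-ideal axiom for $q(I)$ then follows immediately by translating by $-a_0$:
\[
q(I)+S=(p(I)-a_0)+S=(p(I)+S)-a_0\subseteq p(I)-a_0=q(I).
\]

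For the second axiom I would choose any $d\in S$ large enough that $d-a_0\geq c(S)$, where $c(S)$ is the conductor of $S$; such a $d$ exists because $S$ contains all sufficiently large integers. Then $d-a_0\in S$, so
\[
q(I)+d=p(I)+(d-a_0)\subseteq p(I)+S\subseteq p(I)\subseteq S,
\]
which is exactly what is needed. Together with the previous display this shows $q(I)$ is a relative ideal of $S$.

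I don't anticipate a real obstacle here: the premonicity hypothesis is used only indirectly, via Lemma~\ref{lem:5} applied to $p$; the rest is bookkeeping about translation by the constant term $a_0$, with the only mild subtlety being to pick $d\in S$ sufficiently large in case $a_0>0$ so that $d-a_0$ still lies in $S$.
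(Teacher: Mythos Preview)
Your argument is correct and follows essentially the same route as the paper: both reduce to Lemma~\ref{lem:5} applied to $p$, then read off the relative-ideal axioms for $q(I)=p(I)-a_0$ by translation. The only difference is that the paper takes $d=a_0$ directly (so that $q(I)+a_0=p(I)\subseteq S$), whereas you are more explicit about choosing $d\in S$ with $d-a_0\in S$; since $a_0$ need not lie in $S$ when $I$ is a proper ideal, your extra care is justified, though the two formulations are equivalent because $S$ contains all sufficiently large integers.
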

\begin{proof}
Since $p(X_1,\dots,X_n)=q(X_1,\dots,X_n)+a_0$ by Lemma~\ref{lem:5} we have $q(I)+S\subseteq q(I)$ and $q(I)+a_0 \subseteq S$. 
\end{proof}


\section{Calculating the image of a pattern}
\label{sec:calculate}

The following result is useful for calculating the image $p(I)$ of an admissible linear homogeneous pattern. 
\begin{lemma}
\label{lemma:calc1}
Let $I$ be an ideal of a numerical semigroup. 
Let $p(X_1,\dots,X_n)=\sum_{i=1}^na_iX_i$ be a homogeneous linear pattern with $\gcd(a_1,\dots,a_n)=d(\geq 1)$ and let $b_1,\dots,b_n$ be (non-unique) integers such that $a_1b_1+\cdots+a_nb_n=d$. 
It is not assumed that $p$ is admitted by $I$. 
Then $p(s_1,\dots,s_n)+d\in p(I)$  for all non-increasing sequences $s_1,\dots,s_n\in I$ such that $s_1+b_1,\dots,s_n+b_n$ is also a non-increasing sequence of elements from $I$.    
\end{lemma}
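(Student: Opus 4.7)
The plan is essentially a one-line computation exploiting linearity of $p$ together with the Bézout relation $\sum a_i b_i = d$. First I would observe that by the very definition of the image $p(I)$, an integer belongs to $p(I)$ as soon as it can be written as $p(t_1,\dots,t_n)$ for some non-increasing sequence $t_1,\dots,t_n$ of elements of $I$; admissibility of $p$ is not needed here because $p(I)$ is defined as a set of integers (not a priori as a subset of $S$).

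Given the hypothesis that $s_1+b_1,\dots,s_n+b_n$ is itself a non-increasing sequence of elements of $I$, the natural candidate witness for $p(s_1,\dots,s_n)+d \in p(I)$ is the tuple $(s_1+b_1,\dots,s_n+b_n)$. I would then just expand, using linearity and homogeneity of $p$:
\[
p(s_1+b_1,\dots,s_n+b_n)=\sum_{i=1}^n a_i(s_i+b_i)=\sum_{i=1}^n a_i s_i+\sum_{i=1}^n a_i b_i=p(s_1,\dots,s_n)+d.
\]
The left-hand side is in $p(I)$ by the definition just recalled, so the right-hand side is too, which is exactly the claim.

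There is no real obstacle: the only subtle point is that the $b_i$ may be negative, so it is not automatic that $s_i+b_i\in I$ or that the sequence remains non-increasing, which is precisely why these conditions are built into the hypothesis rather than derived. Consequently the lemma is a pure book-keeping statement that will be used later as a tool for walking through $p(I)$ in steps of size $d$, starting from a known value in the image.
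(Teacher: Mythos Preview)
Your argument is correct and is essentially identical to the paper's own proof: both use linearity of $p$ and the B\'ezout relation to compute $p(s_1+b_1,\dots,s_n+b_n)=p(s_1,\dots,s_n)+d$, and then conclude from the hypothesis that $(s_1+b_1,\dots,s_n+b_n)$ is a non-increasing sequence in $I$. Your additional remarks on why admissibility is irrelevant and on the role of the hypothesis are accurate and helpful commentary, but the mathematical content matches the paper exactly.
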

\begin{proof}
We have $p(s_1,\dots,s_n)+d=p(s_1,\dots,s_n)+p(b_1,\dots,b_n)=p(s_1+b_1,\dots,s_n+b_n)$. Therefore, if $s_1+b_1,\dots,s_n+b_n$ is a non-increasing sequence of elements from $I$, then $p(s_1,\dots,s_n)+d\in p(I)$. 
\end{proof}
Note that any choice of $b_1,\dots,b_n$ such that $a_1b_1+\cdots+a_nb_n=d$ will do. 
In practice it may be useful to instead require that $s_1\geq \cdots \geq s_n\geq c(I)$ and $s_1+b_1\geq \cdots \geq s_n+b_n\geq c(I)$ where $c(I)$ is the smallest element in $I$ such that $z\in I$ for all integers $z\geq c$. Clearly then both $s_1,\dots,s_n$ and $s_1+b_1,\dots,s_n+b_n$ are  non-increasing sequences of elements from $I$. 

\begin{theorem} 
\label{thm:calc}
Let $I$, $c(I)$, $p$, $d$ and $b_1,\dots,b_n$ be as in Lemma~\ref{lemma:calc1}. Then $J=p(I)/d$ is an ideal of a numerical semigroup. Let $c(J)$ be the maximum of the small elements of $J$. Also let $\alpha=\sum_{i=1}^na_i/d$. Then $c(J)< p(s_1,\dots,s_n)/d$ whenever $s_n\geq c(I)-\min(0,(\alpha-1) b_n)$ and $s_i\geq s_j + \max(0,(\alpha-1)(b_j-b_i))$ for $1\leq i< n$.
\end{theorem}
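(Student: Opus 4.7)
The plan is to split the proof into two halves: first to verify that $J = p(I)/d$ is an ideal of a numerical semigroup so that $c(J)$ is meaningful, and then to exhibit an explicit infinite tail of elements of $J$ that starts just above $p(s_1,\dots,s_n)/d$. For the first half, I would factor $p = d\cdot p'$ with $p' = (a_1/d)X_1 + \cdots + (a_n/d)X_n$. The coefficients of $p'$ have gcd one, and admissibility of a homogeneous linear pattern by $\mathbb{Z}_+$ depends only on the signs of its partial sums of coefficients, which are preserved by scaling by the positive integer $d$; so $p'$ is admitted by $\mathbb{Z}_+$. Theorem~\ref{thmhomideals} applied to $p'$ then gives that $p'(I) = p(I)/d = J$ is an ideal of a numerical semigroup.

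The core of the proof is to produce, for each $k\geq 0$, a non-increasing sequence $t^{(k)}\in I^n$ with $p(t^{(k)}) = p(s_1,\dots,s_n) + kd$. In the main case $\alpha\geq 1$, I would divide with remainder $k = q\alpha + r$ with $q\geq 0$ and $0\leq r\leq \alpha-1$, and set
\[
t^{(k)}_i \;=\; s_i + q + r\,b_i, \qquad i=1,\dots,n.
\]
Linearity then forces $p(t^{(k)}) = p(s_1,\dots,s_n) + q\cdot p(1,\dots,1) + r\cdot p(b_1,\dots,b_n) = p(s_1,\dots,s_n) + q\alpha d + rd = p(s_1,\dots,s_n) + kd$, so once $t^{(k)}$ is verified to be a non-increasing sequence in $I$, we conclude $p(s_1,\dots,s_n) + kd \in p(I)$ for every $k\geq 0$; dividing by $d$ shows every integer $\geq p(s_1,\dots,s_n)/d$ lies in $J$, whence $c(J) \leq p(s_1,\dots,s_n)/d$.

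The main obstacle, and the reason the hypotheses take their peculiar form, is verifying that $t^{(k)}$ is non-increasing and lies in $I$ uniformly in $(q,r)$. For monotonicity the consecutive difference is $t^{(k)}_i - t^{(k)}_{i+1} = (s_i-s_{i+1}) + r(b_i-b_{i+1})$, which is $\geq 0$ iff $s_i-s_{i+1} \geq r(b_{i+1}-b_i)$; since $0\leq r\leq \alpha-1$, this is ensured by $s_i - s_{i+1} \geq \max(0,(\alpha-1)(b_{i+1}-b_i))$, which is exactly the second hypothesis applied to consecutive indices (and telescoping handles all pairs). For membership, the smallest entry $t^{(k)}_n = s_n+q+r b_n$ is minimised over admissible $(q,r)$ at $q=0$ together with $r=0$ if $b_n\geq 0$ or $r=\alpha-1$ if $b_n<0$; this worst case is $s_n + \min(0,(\alpha-1)b_n)$, and the first hypothesis $s_n \geq c(I) - \min(0,(\alpha-1)b_n)$ makes exactly this value at least $c(I)$, so $t^{(k)}_n\in I$ and the remaining entries follow by monotonicity. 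The strict inequality $c(J) < p(s_1,\dots,s_n)/d$ then falls out from the fact that $c(J)$ is defined to be no larger than any element beyond which the ideal contains every integer, combined with the observation that the same construction still works if one shrinks any $s_i$ slightly within the hypotheses' slack.
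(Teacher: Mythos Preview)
Your argument is essentially the paper's own: the sequences $t^{(k)}_i = s_i + q + rb_i$ are exactly the paper's $s_i + x + tb_i$ (with $x=q$, $t=r$), and the verification that these are non-increasing sequences in $I$ under the stated hypotheses is the same computation. Your remark on the strict inequality is hand-wavy, but the paper's proof likewise only establishes $c(J)\leq p(s_1,\dots,s_n)/d$, so you have not lost anything relative to the original.
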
    
\begin{proof}
If $d=\gcd(a_1,\dots,a_n)$ then $d$ divides all elements of $p(I)$. Dividing $p(I)$ by $d$ gives the image of $I$ under the pattern $q=p/d=\sum_{i=1}^n\frac{a_i}{d}X_i$ which has relatively prime coefficients $c_i=\frac{a_i}{d}$ such that $c_1b_1+\cdots+c_nb_n=1$ and $\sum_{i=1}^nc_i=\alpha$. Therefore, by Theorem~\ref{thmhomideals}, $J=q(I)$ is an ideal of some semigroup. 
  
Any non-increasing sequence $s_1,\dots,s_n\in \mathbb{Z}$ with $s_n\geq c(I)$ is a non-increasing sequence of elements of $I$. 
Note that the $b_i$:s can be negative integers. 
Take $s_n\geq c(I)-\min(0,(\alpha-1) b_n)$ and $s_i\geq s_j + \max(0,(\alpha-1)(b_j-b_i))$ for $1\leq i< n$. 
Under these conditions we have that $s_i+tb_i\geq s_j+tb_j\geq c(I)$ for all $1\leq i\leq j\leq n$ and $0\leq t\leq \alpha-1$, so that $q(s_1+tb_1,\dots,s_n+tb_n)\in q(I)$. 
Now note that $q(s_1+x+(t+1)b_1,\dots,s_n+x+(t+1)b_n)=q(s_1+x+tb_1,\dots,s_n+x+tb_n)+1$ for all $0\leq t\leq \alpha-1$ and for all $x\geq 0$. 
Also, $q(s_1+x,\dots,s_n+x)=q(s_1,\dots,s_n)+\alpha x$. 
Therefore $q(I)$ contains all integers larger or equal to $q(s_1,\dots,s_n)$ with $s_n\geq c(I)-\min(0,(\alpha-1) b_n)$ and $s_i\geq s_j + \max(0,(\alpha-1)(b_j-b_i))$ for $1\leq i< n$.    
\end{proof}

Theorem~\ref{thm:calc} implies that the set of non-increasing sequences of $I$ which is needed for calculating explicitly $p(I)$ is finite. However, in practice this number will depend on the choice of $b_1,\dots,b_n$.

A linear pattern $p(X_1,\dots,X_n)=\sum_{i=1}^na_iX_i+a_0$ is called \emph{strongly admissible} if the partial sums $\sum_{i=1}^{n'}a_i\geq 1$ for all $1\leq n'\leq n$. (Strongly admissible patterns were introduced differently in \cite{MariaPedro}, but the two definitions are equivalent). 

\begin{lemma}
\label{lemma:Y}
Let $C$ be a positive integer constant, and let $p(X_1,\dots,X_n)=\sum_{i=1}^na_iX_i+a_0$ be a strongly admissible linear pattern.  
Let $Y_t=\{(t,s_2,\dots,s_n): \forall~ 2\leq i \leq n~ s_i\in I, \forall~ 2\leq i \leq n-1 ~ s_i\geq s_{i+1}\}$ and let $Y(C)=\bigcup_{t\leq x,~t\in I} Y_t$  for the smallest $x\in I$ such that $p(x,s_2,\dots,s_n)\geq C$ for all $s_2,\dots,s_n\in I$ such that $x\geq s_1 \geq \cdots \geq s_n$. 
Then $Y(C)$ is a well-defined finite set and contains the set of non-increasing sequences $s_1,\dots,s_n\in I$ such that $p(s_1,\dots,s_n)<C$. 
\end{lemma}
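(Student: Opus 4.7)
My plan is to exploit the telescoping identity
\[
p(s_1,\dots,s_n) = \sum_{i=1}^{n-1} A_i (s_i - s_{i+1}) + A_n s_n + a_0,
\]
where $A_i = \sum_{j=1}^{i} a_j$. Strong admissibility forces $A_i \geq 1$ for every $i$, and since any non-increasing sequence in $I$ satisfies $s_i - s_{i+1} \geq 0$ and $s_n \geq 0$, this immediately yields the pointwise lower bound $p(s_1,\dots,s_n) \geq s_1 + a_0$. From here, well-definedness of $x$ is easy: any $x \in I$ with $x \geq C - a_0$ satisfies the required property (such $x$ exists by Lemma~\ref{lem:conductor}), so the set of such $x$ is a non-empty subset of $\mathbb{Z}_+$ and has a least element. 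Finiteness of $Y(C)$ is then also immediate, since $\{t \in I : t \leq x\}$ is finite and each $Y_t$ consists of finitely many non-increasing sequences in $I$ bounded above by $t$, hence at most $(t+1)^{n-1}$ of them.

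The substantive step is to show that every non-increasing sequence $(s_1, \dots, s_n)$ in $I$ with $p(s_1, \dots, s_n) < C$ lies in $Y(C)$; for this it suffices to prove $s_1 \leq x$. I would argue by contradiction: assume $s_1 > x$ and consider the truncated sequence defined by $t_1 = x$ and $t_i = \min(s_i, x)$ for $i \geq 2$. Since $\min(\cdot, x)$ preserves the non-increasing order and each $t_i$ lies in $I$, the defining property of $x$ applies and gives $p(t_1,\dots,t_n) \geq C$. Letting $k$ be the largest index with $s_k > x$, so that $t_i = x$ for $i \leq k$ and $t_i = s_i$ for $i > k$, the telescoping identity reduces $p(s_1,\dots,s_n) - p(t_1,\dots,t_n)$ to $\sum_{i=1}^{k-1} A_i(s_i - s_{i+1}) + A_k(s_k - x)$ when $k < n$, and to $\sum_{i=1}^{n-1} A_i(s_i - s_{i+1}) + A_n(s_n - x)$ when $k = n$. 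Each summand is non-negative (using $A_i \geq 1$ together with $s_k > x$ and the non-increasing hypothesis), so $p(s_1,\dots,s_n) \geq C$, contradicting $p < C$.

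The main obstacle is the telescoping comparison in the contradiction step: it requires matching the two expansions term by term around the threshold index $k$ and verifying that every surviving summand has non-negative sign. The remaining ingredients, namely existence of $x$, finiteness of each $Y_t$, and preservation of monotonicity by $\min(\cdot, x)$, are straightforward consequences of Lemma~\ref{lem:conductor} and basic manipulations.
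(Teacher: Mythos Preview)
Your argument is correct and follows the same overall strategy as the paper: both establish the telescoping lower bound $p(s_1,\dots,s_n)\geq s_1+a_0$ from strong admissibility, use it to show that some $x\in I$ with the defining property exists (namely any $x\geq C-a_0$), and then argue that any sequence with $p<C$ must have $s_1\leq x$.

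The one point of genuine difference is the last step. The paper simply asserts ``we have also proved that for all $s_1,\dots,s_n\in I$ with $s_1>x$, $p(s_1,\dots,s_n)\geq C$,'' apparently relying on the bound $p\geq s_1+a_0$; but since $x$ is defined as the \emph{smallest} element of $I$ with the property, one does not know a~priori that $x\geq C-a_0$, so the bound $p\geq s_1+a_0>x+a_0$ does not by itself yield $p\geq C$. Your truncation argument with $t_i=\min(s_i,x)$ and the Abel comparison $p(s)-p(t)=\sum_{i=1}^{k}a_i(s_i-x)\geq 0$ closes this gap cleanly: it shows directly that the defining property of $x$ is upward closed in the first coordinate, which is exactly what is needed. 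So your proof is the same approach as the paper's, but with the monotonicity step made rigorous rather than asserted.
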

\begin{proof}
First we prove that if  $\sum_{i=1}^{n'}a_i\geq 1$ for all $1\leq n'\leq n$, then there is an $x\in I$ such that $p(x,s_2,\dots,s_n)\geq C$ for all $s_2,\dots,s_n\in I$ such that $x\geq s_2 \geq \cdots \geq s_n$. 
Let $s_1\geq \cdots \geq s_n\geq 0$. 
Then $p(s_1,\dots,s_n)=\sum_{i=1}^na_is_i+a_0=\sum_{i=1}^{n-1}(\sum_{j=1}^ia_j)(s_i-s_{i+1})+\sum_{j=1}^na_js_n+a_0\geq \sum_{i=1}^{n-1}1\cdot (s_i-s_{i+1})+1\cdot s_n+a_0=s_1+a_0$. 
Therefore, by taking $x=s_1\geq C-a_0$, one has $p(x,s_2,\dots,s_n)\geq C$ for all $s_2,\dots,s_n\in I$ such that $x\geq s_2\geq \cdots \geq s_n$, so $Y$ is a well-defined finite set. 

Now note that we have also proved that for all $s_1,\dots,s_n\in I$ with $s_1>x$, $p(s_1,\dots,s_n)\geq C$. Consequently, if $s=(s_1,\dots,s_n)$ is a  non-increasing sequence of elements of $I$ such that $p(s_1,\dots,s_n)<C$, then $s\in Y$. 

\end{proof}

The following algorithm calculates $p(I)$ by calculating first an upper bound $C\geq c(J)$ and then calculating $p(s)$ for all $s\in Y(C)$.  
\begin{lemma}
\label{lemma:alg}
Let notation be as in Lemma~\ref{lemma:calc1} and Theorem~\ref{thm:calc}. Assume also that the admissible homogeneous pattern polynomial $p(X_1,\dots,X_n)=\sum_{i=1}^na_iX_i$ is strongly admissible. 
The following algorithm can be used to calculate $p(I)$. .   
\begin{enumerate}
\item Set $q=p/d$. 
\item Calculate $C=q(s_1,\dots,s_n)$ with $s_n= c(I)+\min(0,\alpha b_n)$ and $s_i= s_j + \min(0,\alpha(b_j-b_i))$ for $1\leq i< n$. 
\item Calculate $Q:=\{q(s_1,\dots,s_n): (s_1,\dots,s_n)\in Y(C)\}$ where $Y(C)$ is the set defined in Lemma~\ref{lemma:Y}. 
\item Now $q(I)=Q\cup \{z\in \mathbb{Z}: z\geq C\}$ and $p(I)=\{ds: s\in q(I)\}$.
\end{enumerate}
\end{lemma}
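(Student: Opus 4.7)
The plan is to combine Theorem~\ref{thm:calc} and Lemma~\ref{lemma:Y} into an algorithmic decomposition of $q(I)$ as the union of a \emph{tail} $\{z \in \mathbb{Z} : z \geq C\}$ and a finite explicit set $Q$ of small values, then multiply by $d$ to recover $p(I)$. First I would observe that by Step~1, $q = p/d$ has relatively prime integer coefficients and inherits strong admissibility from $p$; hence, by Theorem~\ref{thmhomideals}, $J = q(I)$ is an ideal of some numerical semigroup, and in particular has a well-defined maximum of small elements $c(J)$.

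Second, I would show that the value $C$ produced in Step~2 satisfies $C > c(J)$. This amounts to verifying that the particular choice ``$s_n = c(I) + \min(0,\alpha b_n)$, $s_i = s_j + \min(0,\alpha(b_j-b_i))$'' meets the inequalities demanded in Theorem~\ref{thm:calc}, which, unpacked, say exactly that the shifted tuples $(s_1+tb_1,\dots,s_n+tb_n)$ are non-increasing and bounded below by $c(I)$ for all $0\leq t\leq \alpha-1$. Granting that, Theorem~\ref{thm:calc} yields $c(J) < q(s_1,\dots,s_n) = C$, so every integer $z \geq C$ lies in $q(I)$.

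Third, I would apply Lemma~\ref{lemma:Y} with the same $C$: every non-increasing sequence of elements of $I$ whose $q$-value is less than $C$ belongs to $Y(C)$. Hence $q(I)\cap\{z: z<C\} \subseteq q(Y(C)) = Q$, while conversely $Q \subseteq q(I)$ because each element of $Q$ is the $q$-image of a non-increasing sequence in $I$. Any $z \in q(I)$ satisfies either $z \geq C$ or $z < C$, so combining with the tail inclusion gives $q(I) = Q \cup \{z\in\mathbb{Z} : z \geq C\}$. Step~4 finishes by unscaling: $p(I) = \{ds : s \in q(I)\}$.

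The main obstacle I anticipate is the bookkeeping in the second step: the $b_i$'s can be positive, negative, or zero, and one must check in all sign combinations that the algorithm's formulas for $s_n$ and for the gaps $s_i - s_j$ really imply the inequalities $s_n \geq c(I) - \min(0,(\alpha-1)b_n)$ and $s_i \geq s_j + \max(0,(\alpha-1)(b_j-b_i))$ required by Theorem~\ref{thm:calc}. There is also a superficial mismatch between ``$\alpha$'' in Step~2 and ``$\alpha-1$'' in the theorem, which must be reconciled using the fact that strong admissibility forces $\alpha \geq 1$. Once that sign-analysis is settled, the remainder of the argument is a direct assembly of previously established results.
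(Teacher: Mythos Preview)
Your proposal is correct and follows exactly the approach the paper takes: the paper's entire proof is the single line ``By combining Theorem~\ref{thm:calc} and Lemma~\ref{lemma:Y},'' and your argument is simply a fleshed-out version of that combination. Your observation about the bookkeeping mismatch (the $\alpha$ versus $\alpha-1$ and the $+\min$ versus $-\min$/$\max$ discrepancies between Step~2 and the hypotheses of Theorem~\ref{thm:calc}) is a genuine wrinkle that the paper's one-line proof does not address, so in that respect you are being more careful than the original.
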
  
\begin{proof}
By combining Theorem~\ref{thm:calc} and Lemma~\ref{lemma:Y}.

\end{proof}

Given a linear strongly admissible pattern polynomial and two ideals $I$ of a numerical semigroup $S$ and $J$ of a numerical semigroup $S'$, step 3 in the algorithm in Lemma~\ref{lemma:alg} alone can be used to determine whether or not $p(I)\subseteq J$, after defining $C=\min(s\in J: n\in J~ \forall~ n\in \{z\in \mathbb{Z}: z\geq s\})$. 
In the particular case when $I$ is an ideal of a numerical semigroup $S$ and $J=S$ (and so $C$ is the conductor of $S$), this calculation determines whether or not $I$ admits $p$.  The existence of an algorithm that determines if a strongly admissible pattern is admitted by a numerical semigroup was first announced in \cite{MariaPedro}. 
\begin{lemma}
\label{lemma:calc2}
Let $I$ be an ideal of a numerical semigroup, let $p(X_1,\dots,X_n)=\sum_{i=1}^na_iX_i+a_0$ be an admissible linear pattern and let $$q(X_1,\dots,X_n)=p(X_1,\dots,X_n)-a_0.$$  
Then $p(I)=q(I)+a_0$.  
\end{lemma}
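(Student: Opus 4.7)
The plan is to unwind the definition of the image of a pattern and use the fact that $p$ and $q$ differ by a constant, so the pattern's output at any input is just shifted by $a_0$. Concretely, $p(I)$ is by definition the set of values $p(s_1,\dots,s_n)$ as $(s_1,\dots,s_n)$ ranges over all non-increasing sequences of elements of $I$, and likewise for $q(I)$. The identity $q(X_1,\dots,X_n) = p(X_1,\dots,X_n) - a_0$ holds as polynomials, hence also pointwise for every specialization.

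First I would fix an arbitrary non-increasing sequence $(s_1,\dots,s_n)$ of elements of $I$ and compute
\[
p(s_1,\dots,s_n) = \sum_{i=1}^n a_i s_i + a_0 = q(s_1,\dots,s_n) + a_0.
\]
This gives the inclusion $p(I) \subseteq q(I) + a_0$ directly. The reverse inclusion uses the same equality: any element of $q(I) + a_0$ has the form $q(s_1,\dots,s_n) + a_0$ for some non-increasing sequence in $I$, which equals $p(s_1,\dots,s_n) \in p(I)$.

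The only thing to be slightly careful about is that both $p(I)$ and $q(I)$ are taken over the \emph{same} domain of non-increasing sequences in $I$; this is immediate because the domain depends only on $I$ and not on the pattern polynomial, so there is no subtlety there. I do not expect any real obstacle: the proof is essentially a single algebraic identity together with the definition of the image, and no properties beyond linearity of $p$ (more precisely, the fact that the constant term of $p$ is $a_0$) are needed.
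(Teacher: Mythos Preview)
Your proof is correct and is essentially the same as the paper's: both simply observe that for any non-increasing sequence $(s_1,\dots,s_n)$ in $I$ one has $p(s_1,\dots,s_n)=q(s_1,\dots,s_n)+a_0$, which immediately gives $p(I)=q(I)+a_0$. The paper states only one direction explicitly, while you spell out both inclusions, but the argument is identical.
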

\begin{proof}
Indeed, any element in $p(I)$ is of the form $p(s_1,\dots,s_n)=\sum_{i=1}^na_is_i+a_0=q(s_1,\dots,s_n)+a_0$. 
\end{proof}
Together Lemma~\ref{lemma:alg} and Lemma~\ref{lemma:calc2} can be used to calculate the image of an ideal of a numerical semigroup under a linear strongly admissible pattern $p$ in a finite number of steps. 

\section{Patterns of ideals of numerical semigroups}
\label{sec:patternsofideals}
In this article a pattern admitted by an ideal $I$ of a numerical semigroup $S$ is a multivariate polynomial function which evaluated on non-increasing sequences of elements from $I$ returns an element of $S$. 
This definition generalises previous definitions of patterns admitted by numerical semigroups. 
Indeed, a homogeneous linear pattern as defined in \cite{MariaPedro} is according to our definition still a pattern admitted by a numerical semigroup. 
However, a non-homogeneous linear pattern as defined in \cite{MariaPedroAlbert} is now a pattern admitted by the maximal ideal of some numerical semigroup. 

The concept can be generalised further, for example by relaxing the criteria that the codomain of a pattern admitted by an ideal necessarily should be a numerical semigroup containing the ideal. Then the codomain of the pattern can be another numerical semigroup, or generalising even more, an ideal of some numerical semigroup.  
It is possible to go even further by considering relative ideals instead of ideals.  
One can also restrict to patterns with some particular property like for example linearity or homogeneity.

We say that a linear pattern that returns an element in $I$ when evaluated on the non-increasing sequences of elements of $I$ is an \emph{endopattern} of $I$. 
A pattern admitted by an ideal $I$ with codomain $J$ is \emph{surjective} when $p(I)=J$. 
A surjective endopattern of $I$ is therefore a pattern $p$ such that $p(I)=I$. 
Formally, an endopattern of $I$ is an endomorphism of the set of non-increasing sequences of $n$ elements from $I$. 
Note that, for example, the map $x\mapsto (x,\dots,x)$ is an embedding of the image of the pattern in the set of non-increasing sequences of $n$ elements from $I$.

In this article, the focus is on linear endopatterns of numerical semigroups and ideals of numerical semigroups, in particular maximal ideal. To avoid confusion we will each time explicitly state the properties of the patterns that we consider in each moment. 

We start with necessary conditions for linear patterns to be endopatterns and surjective endopatterns of numerical semigroups.  
We will repeatedly make use of the following result, first proved in \cite{MariaPedro} for homogeneous linear patterns and in \cite{MariaPedroAlbert} for non-homogeneous linear patterns. Here we prove the result for ideals of numerical semigroups, making use of Abel's partial summation formula (this proof is due to Christian Gottlieb). 
\begin{lemma}
\label{lemma:MPA}
If $p(X_1,\dots,X_n)=\sum_{i=1}^na_iX_i+a_0$ is a linear pattern admitted by an ideal $I$ of a numerical semigroup $S$ (i.e. $p(s_1,\dots,s_n)\in S$ for all non-increasing sequences $s_1,\dots,s_n$ of elements from $I$), then  
\begin{itemize}
\item $\sum_{i=1}^{n'}a_i\geq 0$ for all $1\leq n'\leq n$, and 
\item $\sum_{i=1}^na_is_i\geq \sum_{i=1}^na_is_n$ for all non-increasing sequences $s_1,\dots,s_n\in I$.  
\end{itemize}
\end{lemma}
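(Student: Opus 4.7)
The plan is to invoke Abel's partial summation formula
\[
\sum_{i=1}^n a_i b_i = A_n b_n + \sum_{n'=1}^{n-1} A_{n'}(b_{n'} - b_{n'+1}),
\]
where $A_{n'} := \sum_{i=1}^{n'} a_i$, and to derive both bulleted claims from it, exploiting the fact that $I$ contains every sufficiently large integer (Lemma~\ref{lem:conductor}).

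First I would establish the partial-sum inequality $A_{n'} \geq 0$. Fix $n'$ with $1 \leq n' \leq n$, pick any fixed $t \in I$, and, for each integer $T \in I$ with $T \geq t$, consider the non-increasing sequence
\[
(\underbrace{T,\ldots,T}_{n'},\, \underbrace{t,\ldots,t}_{n-n'})
\]
of elements of $I$. Plugging it into $p$ yields
\[
A_{n'}\, T + (A_n - A_{n'})\, t + a_0 \;=\; A_{n'}(T - t) + A_n t + a_0.
\]
Since $p$ is admitted by $I$, this value belongs to $S \subseteq \mathbb{Z}_+$, hence is non-negative for every admissible $T$. If $A_{n'}$ were negative, letting $T \to \infty$ (which is legitimate since by Lemma~\ref{lem:conductor} every integer above $c(I)$ lies in $I$) would drive the expression to $-\infty$, a contradiction. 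Thus $A_{n'} \geq 0$.

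For the second inequality, I would apply Abel's formula to the sequence $b_i := s_i - s_n$, whose last term is zero, giving
\[
\sum_{i=1}^n a_i s_i - \sum_{i=1}^n a_i s_n \;=\; \sum_{i=1}^n a_i (s_i - s_n) \;=\; \sum_{n'=1}^{n-1} A_{n'}(s_{n'} - s_{n'+1}).
\]
Each factor $s_{n'} - s_{n'+1}$ is non-negative since $(s_i)$ is non-increasing, and each $A_{n'}$ is non-negative by the first part; hence the right-hand side is non-negative, as claimed.

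The main obstacle is choosing the test sequence in the first step: it must simultaneously be non-increasing, lie entirely in $I$, and expose $A_{n'}$ as the coefficient of a single parameter that is free to tend to infinity. This is exactly where the cofiniteness of $I$ in $\mathbb{Z}_+$ is used, and it is the step that distinguishes the ideal version of the statement from the semigroup cases already treated in \cite{MariaPedro,MariaPedroAlbert}. Once $A_{n'} \geq 0$ is established, the second inequality is essentially a one-line consequence of Abel's identity.
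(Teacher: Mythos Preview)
Your proof is correct and follows essentially the same route as the paper's: a two-level test sequence $(T,\dots,T,t,\dots,t)$ with $T\to\infty$ to force $A_{n'}\geq 0$, and Abel's partial summation to deduce the second inequality from the first. You are somewhat more explicit than the paper in invoking Lemma~\ref{lem:conductor} to justify that arbitrarily large $T$ lie in $I$, but the argument is the same.
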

\begin{proof}
Assume there is an $n'$ with $1\leq n'\leq n$ such that $\sum_{i=1}^{n'}a_i< 0$. If $s$ is very large compared to $t$ we obtain $$p(\underbrace{s,\dots,s}_{n'},\underbrace{t,\dots,t}_{n-n'})=\left(\sum_{i=1}^{n'}a_i\right)s+ \left(\sum_{i=n'+1}^na_i\right)t<0,$$ implying that $p$ cannot be a pattern admitted by $I$. 

Now, let $A_{j}=\sum_{i=1}^{j}a_i$. By Abel's formula for summation by parts \cite{Apostol}, we have $\sum_{i=1}^na_i(s_i-s_n)=A_n(s_n-s_n) -\sum_{i=1}^{n-1}A_i(s_{i+1}-s_i)=-\sum_{i=1}^{n-1}A_i(s_{i+1}-s_i)\geq 0$.  
\end{proof} 

\begin{proposition}
\label{lemmaendS}
A linear endopattern of a numerical semigroup $S$ is simply a linear pattern defined by a polynomial $p(X_1,\dots,X_n)=\sum_{i=1}^na_iX_n+a_0$ admitted by $S$. 
Therefore 
\begin{itemize}
\item $\sum_{i=1}^{n'}a_i\geq 0$ for all $1\leq n'\leq n$, and 
\item $a_0\in S$. 
\end{itemize}
\end{proposition}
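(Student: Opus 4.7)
The plan is to observe that when $I = S$, the notions of ``pattern admitted by $S$'' and ``endopattern of $S$'' coincide, because the codomain required by admissibility (an element of $S$) is exactly the codomain $I = S$ required by the definition of endopattern. So I first need to argue this identification, which reduces the proposition to establishing the two bulleted consequences for any linear pattern admitted by $S$.

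For the first bullet, I would invoke Lemma~\ref{lemma:MPA} directly. Since $S$ is an ideal of itself, the lemma applies with $I = S$ and yields $\sum_{i=1}^{n'} a_i \geq 0$ for all $1 \leq n' \leq n$. No separate argument is required.

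For the second bullet, I would simply evaluate $p$ on a constant sequence in $S$. Since $0 \in S$, the sequence $(0,\dots,0)$ is a non-increasing sequence of elements of $S$, so $p(0,\dots,0) = a_0$ must lie in $S$ by the assumption that $p$ is admitted by $S$. Alternatively, one could evaluate on any constant sequence $(s,\dots,s)$ with $s \in S$ and use the first bullet to conclude.

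There is no real obstacle here; the statement is essentially a packaging result that identifies endopatterns of $S$ with admissible patterns and then records the two necessary conditions that follow immediately from Lemma~\ref{lemma:MPA} and a single evaluation at $0$. The only thing to be careful about is correctly justifying the identification of ``endopattern'' with ``admitted pattern'' in the case $I = S$, which amounts to noting that the codomain $S$ of an admitted pattern equals $I$.
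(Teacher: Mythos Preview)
Your proposal is correct and matches the paper's own proof essentially line for line: the paper also notes that an endopattern of $S$ is just a pattern admitted by $S$, evaluates $p(0,\dots,0)=a_0\in S$, and invokes Lemma~\ref{lemma:MPA} for the partial-sum condition. The only cosmetic difference is the order in which the two bullets are handled.
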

\begin{proof}
Let $p$ be a linear endopattern of $S$, then $p$ is a pattern defined by a linear multivariate polynomial $p(X_1,\dots,X_n)=\sum_{i=1}^na_iX_i+a_0$  such that evaluated on any non-increasing sequence of elements of $S$, the result is in $S$. 
In particular, $p(0,\dots,0)=a_0\in S$. 
For the rest of the statement, apply Lemma \ref{lemma:MPA}. 
\end{proof}

\begin{proposition}
\label{lemmaautS}
A linear surjective endopattern $p$ of a numerical semigroup $S$ is necessarily homogeneous. 
If $p$ is a premonic homogeneous endopattern of $S$, then $p$ is always surjective. 
\end{proposition}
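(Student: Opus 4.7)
The plan is to prove the two statements separately, and to lean heavily on results already established in the excerpt.

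For the first claim, I would take a linear surjective endopattern $p(X_1,\dots,X_n)=\sum_{i=1}^n a_i X_i+a_0$ of $S$ and show $a_0=0$. Surjectivity says $p(S)=S$, so in particular $0\in p(S)$: there is a non-increasing sequence $s_1\geq\cdots\geq s_n$ in $S$ with $p(s_1,\dots,s_n)=0$. The strategy is to bound $p(s_1,\dots,s_n)$ from below by $a_0$. By Lemma~\ref{lemma:MPA}, the inequality $\sum_{i=1}^n a_i s_i\geq\bigl(\sum_{i=1}^n a_i\bigr)s_n$ holds, and the partial-sum condition of the same lemma (with $n'=n$) gives $\sum_{i=1}^n a_i\geq 0$; since $s_n\in S\subseteq\mathbb{Z}_+$, this forces $\sum_{i=1}^n a_i s_i\geq 0$ and hence $p(s_1,\dots,s_n)\geq a_0$. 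Substituting $p(s_1,\dots,s_n)=0$ gives $a_0\leq 0$. On the other hand, Proposition~\ref{lemmaendS} applied to the endopattern $p$ yields $a_0\in S$, so $a_0\geq 0$. Combining, $a_0=0$ and $p$ is homogeneous.

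For the second claim, the statement is already essentially Lemma~\ref{lemmapremonic}: a premonic linear homogeneous pattern admitted by $S$ satisfies $p(S)=S$, which is precisely surjectivity of $p$ as an endopattern. So this half requires only a direct invocation of that lemma, noting that an endopattern of $S$ is by definition a linear pattern admitted by $S$ whose image lies in $S$.

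The main obstacle, if any, is the careful use of Lemma~\ref{lemma:MPA} to get the lower bound on $p(s_1,\dots,s_n)$; everything else is a bookkeeping of previously established facts. No case analysis on the signs of individual $a_i$ is required, because the Abel-summation inequality packages the partial-sum conditions into a single useful bound.
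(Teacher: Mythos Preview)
Your proof is correct and follows essentially the same route as the paper's: both use Proposition~\ref{lemmaendS} to get $a_0\in S$ (hence $a_0\geq 0$), then use the partial-sum condition from Lemma~\ref{lemma:MPA} to see that $p$ is bounded below by $a_0$ on all non-increasing sequences, forcing $a_0\leq 0$ since $0$ lies in the image; and for the second claim both simply invoke Lemma~\ref{lemmapremonic}. The only difference is cosmetic: the paper phrases the first step as ``if $a_0>0$ then $0\notin p(S)$'' while you phrase it as ``$0\in p(S)$ implies $a_0\leq 0$.''
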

\begin{proof}
A surjective endopattern $p$ of $S$ is an endopattern of $S$, therefore, by Lemma~\ref{lemmaendS}, $p$ is a linear pattern defined by a polynomial of the form $p(X_1,\dots,X_n)=\sum_{i=1}^na_iX_i+a_0$ with $a_0\in S$ and  $\sum_{i=1}^{n'}a_i\geq 0$ for all $1\leq n'\leq n$. 
But if $a_0>0$ then this gives $p(s_1,\dots,s_n)>0$ for all non-increasing sequences of $S$ so that $ p(S)\subsetneq S$. 
Consequently, if $p(S)=S$, then $p$ is defined by a homogeneous linear pattern. 
Finally, if $p$ is premonic then, by Lemma~\ref{lemmapremonic}, $p$ is surjective. 
\end{proof} 




The next result gives a necessary condition for when a polynomial defines a linear pattern admitted by a proper ideal of a numerical semigroup. 
When the ideal is a maximal ideal then this result strengthens the necessary condition given in \cite{MariaPedroAlbert}. 
\begin{lemma}
\label{neccondreinf}
  If $S$ is a numerical semigroup and  $p=\sum_{i=1}^na_iX_i+a_0$ is a linear pattern admitted by a proper ideal $I$ of $S$, then 
\begin{itemize}
\item $\sum_{i=1}^{n'}a_i\geq 0$ for all $1\leq n'\leq n$ and, moreover, 
\item $\sum_{i=1}^na_i\geq \max(0,-a_0/\mu(I))$, where $\mu(I)=\min(I)$.
\end{itemize} 
\end{lemma}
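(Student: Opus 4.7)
The proof plan decomposes into the two bullet points, which can be handled essentially independently.

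For the first bullet, I would simply invoke Lemma~\ref{lemma:MPA}: since $p$ is a linear pattern admitted by $I$, we immediately get $\sum_{i=1}^{n'}a_i\geq 0$ for all $1\leq n'\leq n$. Taking $n'=n$ in particular gives $\sum_{i=1}^n a_i\geq 0$, which takes care of the $0$ term inside the maximum in the second bullet.

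For the second bullet, the key remaining task is to show $\sum_{i=1}^n a_i\geq -a_0/\mu(I)$. First I would observe that since $I$ is a \emph{proper} ideal of $S$, we have $0\notin I$: otherwise $I+S\subseteq I$ would give $S=0+S\subseteq I\subseteq S$, forcing $I=S$. Hence $\mu(I)\geq 1>0$, and it makes sense to divide by $\mu(I)$. The next step is to evaluate $p$ at the constant sequence $(\mu(I),\mu(I),\dots,\mu(I))$, which is trivially non-increasing and consists of elements from $I$. Since $p$ is admitted by $I$, we obtain
\[
p(\mu(I),\dots,\mu(I))=\mu(I)\sum_{i=1}^n a_i+a_0\in S\subseteq \mathbb{Z}_+,
\]
so in particular the left-hand side is $\geq 0$. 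Dividing by $\mu(I)>0$ rearranges to $\sum_{i=1}^n a_i\geq -a_0/\mu(I)$, and combining with the inequality $\sum_{i=1}^n a_i\geq 0$ from the first bullet yields the claimed bound.

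There is no real obstacle here: the two bullet points are immediate consequences of Lemma~\ref{lemma:MPA} together with the single observation that the constant sequence $(\mu(I),\dots,\mu(I))$ is a legitimate test sequence in $I$ on which $p$ must produce a non-negative output. The only minor care needed is in justifying $\mu(I)>0$ from properness of $I$, which is where the hypothesis that $I$ is a proper ideal (rather than an arbitrary ideal) gets used, and which also explains why this strengthens the earlier statement in \cite{MariaPedroAlbert} formulated for the maximal ideal.
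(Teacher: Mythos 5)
Your proof is correct and follows essentially the same route as the paper: invoke Lemma~\ref{lemma:MPA} for the first bullet and evaluate $p$ at the constant sequence $(\mu(I),\dots,\mu(I))$ for the second. The only cosmetic differences are that you argue directly rather than by contradiction, and you explicitly justify $\mu(I)>0$ from properness of $I$, a point the paper uses implicitly.
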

\begin{proof}
For the first part, apply Lemma \ref{lemma:MPA}. 
The second part is an improvement of $\sum_{i=1}^na_i\geq 0$ which is relevant only when $a_0<0$. 
There are no linear patterns admitted by $S$ with $a_0<0$ (see Proposition~\ref{lemmaendS}), but there may be linear patterns admitted by $I$ with that property. 
Therefore assume that $p(X_1,\dots,X_n)=\sum_{i=1}^na_iX_i+a_0$ is a linear pattern admitted by $I$ with $a_0<0$ and $\sum_{i=1}^na_i<\max(0,-a_0/\mu(I))=-a_0/\mu(I)$, then $p(\mu(I),\dots,\mu(I)))=\sum_{i=1}^na_i\mu(I)+a_0<\left(-a_0/\mu(I)\right)\cdot \mu(I)+a_0=0$ so that $p(\mu(I),\dots,\mu(I))\not\in S$. But then $p$ cannot be a pattern of $I$ and we have a contradiction. 
\end{proof} 

We now use Lemma~\ref{neccondreinf} to give necessary conditions for when a pattern is an endopattern of a proper ideal of a numerical semigroup. 

\begin{proposition}
\label{propendM}
A linear endopattern of a proper ideal $I$ of a numerical semigroup $S$ is a linear pattern $p(X_1,\dots,X_n)=\sum_{i=1}^na_iX_i+a_0$ admitted by $I$, and so (by Lemma~\ref{neccondreinf}) $p$ necessarily satisfies 
\begin{itemize}
\item $\sum_{i=1}^{n'}a_i\geq 0$ for all $1\leq n'<n$, 
\item $\sum_{i=1}^na_i\geq \max(0,-a_0/\mu(I))$,
\end{itemize}
and additionally 
\begin{itemize}
\item $a_0>0$ or $\sum_{i=1}^{n}a_i>\max(0,-a_0/\mu(I))$ (or both),
\end{itemize}  
where $\mu(I)=\min(I)$. 
\end{proposition}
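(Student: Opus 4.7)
The first two bullets are essentially a direct application of Lemma~\ref{neccondreinf}. A linear endopattern of $I$ is in particular a linear pattern admitted by $I$ (since $p(I) \subseteq I \subseteq S$), so the inequalities $\sum_{i=1}^{n'} a_i \ge 0$ for $1 \le n' \le n$ and $\sum_{i=1}^n a_i \ge \max(0, -a_0/\mu(I))$ follow immediately. The first bullet of the proposition only requires $1 \le n' < n$; the case $n' = n$ is absorbed into the second bullet, so nothing new is needed.

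For the additional condition I would exploit the fact that the constant tuple $(\mu(I),\dots,\mu(I))$ is a legitimate non-increasing sequence of elements of $I$. Applying the endopattern hypothesis gives
\[
p(\mu(I),\dots,\mu(I)) \;=\; \mu(I)\sum_{i=1}^n a_i + a_0 \;\in\; I.
\]
Here the crucial use of properness enters: since $I$ is a proper ideal of $S$, $I \subseteq M(S)$, so $0 \notin I$ and $I$ consists of positive integers. Hence the expression above is strictly positive: $\mu(I)\sum_{i=1}^n a_i + a_0 > 0$.

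Now I split cases on the sign of $a_0$. If $a_0 > 0$, the first disjunct of the additional condition is satisfied and we are done. Otherwise $a_0 \le 0$, and so $\max(0,-a_0/\mu(I)) = -a_0/\mu(I)$. Dividing the strict inequality $\mu(I)\sum_{i=1}^n a_i > -a_0$ by $\mu(I) > 0$ yields $\sum_{i=1}^n a_i > -a_0/\mu(I) = \max(0,-a_0/\mu(I))$, which is the second disjunct.

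There is no real obstacle here, since both parts reduce to already-proved facts; the only subtlety worth highlighting is the passage from the non-strict inequality of Lemma~\ref{neccondreinf} (which only uses $p(I)\subseteq S$) to a strict one, which is precisely what the endopattern condition $p(I)\subseteq I$ buys us when $I$ is proper, via the fact that $0 \notin I$.
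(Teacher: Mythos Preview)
Your proof is correct and follows essentially the same route as the paper's: both hinge on evaluating $p$ at the constant sequence $(\mu(I),\dots,\mu(I))$ and using that $0\notin I$ because $I$ is proper. The paper organizes the last step as a contradiction (assuming $a_0\le 0$ and $\sum a_i=\max(0,-a_0/\mu(I))$, which forces $p(\mu(I),\dots,\mu(I))=0\notin I$), whereas you argue directly via a case split on the sign of $a_0$; the content is the same.
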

\begin{proof}
The first part of this result is Lemma~\ref{neccondreinf}. 
For the second part, assume that $a_0\leq 0$ and $\sum_{i=1}^{n}a_i=\max(0,-a_0/\mu(I))$. 
Then $\sum_{i=1}^na_i\mu(I)+a_0=\max(0,-a_0/\mu(I))\mu(I)+a_0=0$, so that $p(I)\not \subseteq I$.
\end{proof}

\begin{proposition}
\label{propendomax}
A linear pattern $p(X_1,\dots,X_n)=\sum_{i=1}^na_iX_i+a_0$ admitted by the maximal ideal $M(S)$ of a numerical semigroup $S$ is an endopattern of $M(S)$ if and only if $a_0>0$ or $\sum_{i=1}^na_i>\max(0,-a_0/m(S))$ (or both).
\end{proposition}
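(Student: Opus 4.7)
The plan is to prove the two directions separately, exploiting the fact that $\mu(M(S))=m(S)$ so Proposition~\ref{propendM} applies directly.

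For the forward implication ($\Rightarrow$), I would simply invoke Proposition~\ref{propendM} with $I=M(S)$. Since $\mu(M(S))=m(S)$, the three necessary conditions listed there specialise exactly to $\sum_{i=1}^{n'}a_i\geq 0$ for $1\leq n'\leq n$, $\sum_{i=1}^n a_i\geq \max(0,-a_0/m(S))$, and the extra condition $a_0>0$ or $\sum_{i=1}^n a_i>\max(0,-a_0/m(S))$. So this direction is essentially a restatement.

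For the reverse implication ($\Leftarrow$), I would assume $p$ is admitted by $M(S)$ and that $a_0>0$ or $\sum_{i=1}^n a_i>\max(0,-a_0/m(S))$, and show $p(s_1,\dots,s_n)\in M(S)$ for every non-increasing sequence $s_1,\dots,s_n\in M(S)$. Because $p$ is admitted we already know $p(s_1,\dots,s_n)\in S$, so it suffices to verify positivity. Applying Lemma~\ref{lemma:MPA} gives
$$p(s_1,\dots,s_n)=\sum_{i=1}^n a_i s_i + a_0 \geq \Bigl(\sum_{i=1}^n a_i\Bigr) s_n + a_0,$$
and since $s_n\in M(S)$ we have $s_n\geq m(S)$. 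Lemma~\ref{lemma:MPA} also gives $\sum_{i=1}^n a_i\geq 0$, so $(\sum_{i=1}^n a_i)s_n\geq (\sum_{i=1}^n a_i)m(S)$.

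The short case split then finishes the argument: if $a_0>0$, then $p(s_1,\dots,s_n)\geq 0+a_0>0$; otherwise $a_0\leq 0$ and the hypothesis reads $\sum_{i=1}^n a_i>-a_0/m(S)$, i.e.\ $(\sum_{i=1}^n a_i)m(S)+a_0>0$, which combined with the inequality above gives $p(s_1,\dots,s_n)>0$. In either case $p(s_1,\dots,s_n)\in S\setminus\{0\}=M(S)$, proving that $p$ is an endopattern of $M(S)$. The only subtle step is recognising that ``$>$'' in the hypothesis (as opposed to ``$\geq$'' in the necessary condition of Proposition~\ref{propendM}) is exactly what is needed to exclude $p(m(S),\dots,m(S))=0$, and I do not anticipate any other obstacle.
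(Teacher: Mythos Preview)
Your proposal is correct and follows essentially the same argument as the paper: the forward direction is obtained directly from Proposition~\ref{propendM}, and the reverse direction uses Lemma~\ref{lemma:MPA} to reduce to the inequality $p(s_1,\dots,s_n)\geq(\sum_{i=1}^n a_i)m(S)+a_0$ and then a short case split on the sign of $a_0$. The only cosmetic difference is that the paper handles the second case as ``$\sum a_i>\max(0,-a_0/m(S))$'' without first assuming $a_0\le 0$, but the content is identical.
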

\begin{proof}
By Proposition~\ref{propendM}, if $p$ is an endopattern, then $a_0>0$ or $\sum_{i=1}^na_i>\max(0,-a_0/m(S))$ (or both). 

Now assume that  $p(X_1,\dots,X_n)=\sum_{i=1}^na_iX_i+a_0$ is a linear pattern admitted by $M(S)$.  
Then by Lemma~\ref{lemma:MPA}, $\sum_{i=1}^n a_is_i\geq \sum_{i=1}^na_is_n\geq 0$ for all non-increasing sequences $s_1,\dots,s_n\in M(S)$. 
Therefore, if $a_0>0$, then $p(s_1,\dots,s_n)=\sum_{i=1}^na_is_i+a_0\geq a_0>0$ for all non-increasing sequences $s_1,\dots,s_n\in M(S)$, so that $p(M(S))\subseteq M(S)$ and $p$ is an endopattern of $M(S)$. 
Also, if  $\sum_{i=1}^na_i>\max(0,-a_0/m(S))$, then $p(s_1,\dots,s_n)=\sum_{i=1}^na_is_i+a_0\geq (\sum_{i=1}^na_i)m(S)+a_0>\max(0,-a_0/m(S))m(S)+a_0\geq 0$ for all non-increasing sequences $s_1,\dots,s_n\in M(S)$, so that $p(M(S))\subseteq M(S)$ and $p$ is an endopattern of $M(S)$. 
\end{proof}

\begin{proposition}
\label{prop:surendo}
Any linear surjective endopattern of a proper ideal $I$ of a semigroup $S$ is necessarily of the form  $p(X_1,\dots,X_n)=\sum_{i=1}^na_iX_i+a_0$ satisfying $a_0=-(\sum_{i=1}^na_i-1)\mu(I)$ where $\mu(I)$ is the smallest element of $I$. Also, if $p$ is a premonic endopattern of $I$, such that $a_0=-(\sum_{i=1}^na_i-1)\mu(I)$, then $p$ is surjective.
\end{proposition}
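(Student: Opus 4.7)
The plan is to establish the necessary condition first and then the sufficient condition, both by locating the minimum of $p$ on non-increasing sequences from $I$.

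For necessity, suppose $p(X_1,\dots,X_n)=\sum_{i=1}^n a_i X_i + a_0$ is a linear surjective endopattern of $I$, so that $p(I)=I$. The key observation is that $\mu(I)$ is both the smallest element of $I$ and the smallest element of $p(I)$. So I want to determine the minimum value attained by $p$ on non-increasing sequences of elements of $I$. By Lemma~\ref{lemma:MPA} applied to $p$ as a pattern admitted by $I$, we have $\sum_{i=1}^n a_i s_i \geq \sum_{i=1}^n a_i s_n$ for every non-increasing sequence in $I$, and $\sum_{i=1}^n a_i \geq 0$. Since $s_n \geq \mu(I)$, this gives $p(s_1,\dots,s_n) \geq (\sum_{i=1}^n a_i)\mu(I) + a_0$, with equality at the constant sequence $(\mu(I),\dots,\mu(I))$. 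Consequently the minimum of $p(I)$ is exactly $(\sum_{i=1}^n a_i)\mu(I)+a_0$, and setting this equal to $\mu(I)$ and solving for $a_0$ yields $a_0=-(\sum_{i=1}^n a_i-1)\mu(I)$.

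For sufficiency, assume $p$ is a premonic endopattern of $I$ with $a_0=-(\sum_{i=1}^n a_i-1)\mu(I)$. Since $p$ is an endopattern, $p(I)\subseteq I$, so it suffices to show $I\subseteq p(I)$. Premonicity gives an $n'\leq n$ with $\sum_{i=1}^{n'}a_i=1$. For an arbitrary $s\in I$, I would evaluate $p$ on the non-increasing sequence
\[
(\underbrace{s,\dots,s}_{n'},\underbrace{\mu(I),\dots,\mu(I)}_{n-n'}),
\]
which lies in $I$ because $s\geq \mu(I)$. A direct computation gives
\[
p(s,\dots,s,\mu(I),\dots,\mu(I)) = s + \Bigl(\sum_{i=n'+1}^n a_i\Bigr)\mu(I) + a_0 = s + \Bigl(\sum_{i=1}^n a_i - 1\Bigr)\mu(I) + a_0 = s,
\]
using the hypothesis on $a_0$. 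Hence $s\in p(I)$, establishing $I\subseteq p(I)$ and surjectivity.

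The two halves of the argument are essentially symmetric once one identifies that the minimum of $p$ on non-increasing sequences from $I$ is attained at the constant sequence $(\mu(I),\dots,\mu(I))$; the main subtlety (and the only place where one must be careful) is invoking Lemma~\ref{lemma:MPA} to justify this, since without that result one could imagine that some non-constant non-increasing sequence yields a smaller value of $p$. Everything else is bookkeeping with partial sums of the $a_i$.
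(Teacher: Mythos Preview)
Your proof is correct and follows essentially the same approach as the paper's: both use Lemma~\ref{lemma:MPA} to identify $(\sum_{i=1}^n a_i)\mu(I)+a_0$ as the minimum of $p$ on non-increasing sequences from $I$ and equate it with $\mu(I)$ for necessity, and both evaluate $p$ at $(s,\dots,s,\mu(I),\dots,\mu(I))$ using the premonic index for sufficiency. Your write-up is slightly more explicit about why the minimum is attained at the constant sequence, but the argument is otherwise identical.
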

\begin{proof}
Denote by $\mu(I)$ the smallest element of $I$. If $p=\sum_{i=1}^na_iX_i+a_0$ is a linear surjective endopattern of $I$, then by Lemma \ref{lemma:MPA} $\sum_{i=1}^na_i s_i\geq  \sum_{i=1}^na_i s_n \geq \sum_{i=1}^na_i \mu(I)$ for all non-increasing sequences of $I$. Since $p$ is surjective, $\mu(I)$ is in $p(I)$, forcing  $\sum_{i=1}^{n}a_i \mu(I)+a_0=\mu(I)$ so that $a_0=-(\sum_{i=1}^{n}a_i-1)\mu(I)$.
 
Now, if $p$ is premonic, then $\sum_{i=1}^ja_i=1$ for some $j\leq n$ so that if $a_0=-(\sum_{i=1}^na_i -1)\mu$, then $a_0=-\sum_{i=j+1}^na_i\mu$ and so $p(s,\dots,s,\mu,\dots,\mu)=\sum_{i=1}^ja_is+\sum_{i=j+1}^na_i\mu+a_0= s$ for all $s\in I$, implying that $p(I)=I$.   
\end{proof}



The linear patterns considered in the literature before this article  are either homogeneous patterns admitted by $S$ or non-homogeneous patterns admitted by $M(S)$. They all have the numerical semigroup $S$ as codomain. 
The next result shows that almost all these patterns are also endopatterns of $M(S)$. 
\begin{corollary}
\label{cor0}
Let $S$ be a numerical semigroup and $M(S)$ its maximal ideal. 
\begin{itemize}
\item If $p(X_1,\dots,X_n)=\sum_{i=1}^na_iX_i$ is a homogeneous linear pattern admitted by $S$ which is not an endopattern of $M(S)$, then $\sum_{i=1}^na_i=0$. 
\item If $p(X_1,\dots,X_n)=\sum_{i=1}^na_iX_i+a_0$ is a non-homogeneous linear pattern admitted by $M(S)$ which is not an endopattern of $M(S)$, then $a_0\leq 0$ and $\sum_{i=1}^na_i=-a_0/m(S)$. 
\end{itemize}
\end{corollary}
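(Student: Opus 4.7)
The plan is to deduce both statements essentially by negating the biconditional in Proposition~\ref{propendomax} and then combining the resulting inequality with the necessary inequality from Lemma~\ref{neccondreinf} (or equivalently Lemma~\ref{lemma:MPA}). The key observation is that Proposition~\ref{propendomax} characterizes endopatterns of $M(S)$ among linear patterns admitted by $M(S)$ via a strict inequality; failing to be an endopattern therefore forces an equality when combined with the companion weak inequality that every admissible pattern must satisfy.

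First I would reduce both bullets to the setting of patterns admitted by $M(S)$. In the homogeneous case, a pattern admitted by $S$ is a fortiori admitted by the subset $M(S)\subseteq S$, so Proposition~\ref{propendomax} applies (with $a_0=0$). In the non-homogeneous case, the hypothesis already places us in that setting. In both cases, ``$p$ is not an endopattern of $M(S)$'' together with Proposition~\ref{propendomax} yields $a_0\leq 0$ and $\sum_{i=1}^n a_i\leq \max(0,-a_0/m(S))$.

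For the first bullet, $a_0=0$ gives $\max(0,-a_0/m(S))=0$, so $\sum_{i=1}^n a_i\leq 0$. On the other hand, Lemma~\ref{lemma:MPA} (applied to $p$ as a pattern admitted by $S$, taking $n'=n$) gives $\sum_{i=1}^n a_i\geq 0$. These two inequalities force $\sum_{i=1}^n a_i=0$, as desired. For the second bullet, since $a_0\leq 0$ we have $\max(0,-a_0/m(S))=-a_0/m(S)$, hence $\sum_{i=1}^n a_i\leq -a_0/m(S)$. The matching lower bound $\sum_{i=1}^n a_i\geq \max(0,-a_0/m(S))=-a_0/m(S)$ comes from Lemma~\ref{neccondreinf} applied to the proper ideal $I=M(S)$ with $\mu(I)=m(S)$. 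Combining these two inequalities yields $\sum_{i=1}^n a_i=-a_0/m(S)$.

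Since each step is a direct application of results already proved in the excerpt, there is no real obstacle; the proof is essentially a bookkeeping exercise sandwiching $\sum a_i$ between the two bounds and noting that the strict inequality in the endopattern characterization becomes an equality when the pattern fails to be an endopattern.
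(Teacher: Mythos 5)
Your proof is correct and takes essentially the same route as the paper: both deduce from the biconditional in Proposition~\ref{propendomax} that failure to be an endopattern forces $a_0\leq 0$ and $\sum a_i\leq \max(0,-a_0/m(S))$, and then pin down equality using the lower bound $\sum a_i\geq \max(0,-a_0/m(S))$ from Lemma~\ref{neccondreinf} (applied to the proper ideal $M(S)$). Your write-up is only slightly more explicit about the sandwich between the two inequalities; the paper additionally cites Proposition~\ref{propendM}, but that is subsumed by the references you used.
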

\begin{proof}
\begin{itemize}
\item If $p$ is a homogeneous linear pattern admitted by $S$, then $p$ is also admitted by $M(S)$. By Lemma~\ref{neccondreinf} and Proposition~\ref{propendomax}, if $p$ is not an endopattern then $\sum_{i=1}^n a_i=0$. 
\item If $p$ is a non-homogenous linear pattern admitted by $M(S)$, then by Lemma~\ref{neccondreinf}, Proposition~\ref{propendM} and Proposition~\ref{propendomax}, if $p$ is not an endopattern then $a_0\leq 0$ and $\sum_{i=1}^na_i=-a_0/m(S)$. 
\end{itemize}
\end{proof}

Examples of patterns admitted by a maximal ideal $M(S)$ of a semigroup $S$ that are not endopatterns of $M(S)$ can be found in the two non-homogeneous patterns in Weierstrass semigroups mentioned in the introduction. 

Corollary~\ref{cor0} shows that many of the important patterns previously considered in the literature are endopatterns of $M(S)$. For example, this is true for the Arf pattern, the subtraction patterns and the patterns of the form $X+a$ with $a$ pseudo-Frobenius. 
They all belong to the important class of monic linear patterns. 
\begin{lemma}
\label{lemmamonic}
Let $S$ be a numerical semigroup. 
If $S\neq \mathbb{Z}_+$ and $a_0\not\in M(S)$, then there are no monic linear patterns $p(X_1,\dots,X_n)=\sum_{i=1}^na_iX_i+a_0$ admitted by $S$ or by its maximal ideal $M(S)$ with $\sum_{i=1}^n a_i=\max(0,-a_0/m(S))$. 
\end{lemma}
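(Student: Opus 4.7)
The plan is to assume for contradiction that such a monic pattern $p(X_1,\dots,X_n)=\sum_{i=1}^n a_i X_i + a_0$ exists and to exhibit one specific non-increasing sequence in $M(S)$ on which $p$ evaluates to a gap of $S$. Since $M(S)\subseteq S$, a sequence in $M(S)$ is valid both for admission by $S$ and by $M(S)$, so a single construction handles both horns of the disjunction.

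The hypothesis $a_0\notin M(S)$ splits into three subcases: (i) $a_0>0$ with $a_0\notin S$, (ii) $a_0=0$, and (iii) $a_0<0$. In subcase (i), $\max(0,-a_0/m(S))=0$ and the hypothesis forces $\sum a_i=0$, so evaluating $p$ at the constant sequence $(m(S),\dots,m(S))$ in $M(S)$ yields $p=0\cdot m(S)+a_0=a_0\notin S$, the required contradiction. For (ii) and (iii), I would unify by setting $k=-a_0/m(S)\ge 0$; this must be a nonnegative integer, otherwise no pattern can satisfy $\sum a_i=k$ and the statement is vacuous. Monicity ($a_1=1$) then rewrites the hypothesis as $\sum_{i=2}^n a_i=k-1$.

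Because $S\ne\mathbb{Z}_+$, the Frobenius element $F=F(S)\ge 1$ exists. I would evaluate $p$ at the sequence $(F+m(S),m(S),\dots,m(S))$, which is non-increasing and lies in $M(S)$: indeed $m(S)\in M(S)$ and $F+m(S)\ge F+1=c(S)\in S$ and is positive, hence in $M(S)$. A direct calculation gives
\[
p\bigl(F+m(S),m(S),\dots,m(S)\bigr)=(F+m(S))+(k-1)m(S)+a_0 = F,
\]
and $F$ is a gap of $S$, contradicting the assumed admission. The same formula covers $n=1$, where the sequence is just $(F+m(S))$ and the middle term is absent.

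I do not anticipate a genuine obstacle here: the boundary equality $\sum a_i=\max(0,-a_0/m(S))$ is exactly tight enough that the constant sequence at $m(S)$ evaluates to $a_0$, and because the pattern is monic, perturbing only the first coordinate by $F$ shifts the output by precisely $F$, landing on the Frobenius element. The two hypotheses play transparent roles: $S\ne\mathbb{Z}_+$ supplies the gap to be hit, while $a_0\notin M(S)$ rules out the only way $p$ at the constant sequence could land safely inside $S$.
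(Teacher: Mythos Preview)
Your proof is correct and follows the same overall strategy as the paper: for $a_0>0$ you evaluate at the constant sequence $(m,\dots,m)$ to land on $a_0$, and for $a_0\le 0$ you exploit monicity by perturbing only the first coordinate of the constant-$m$ sequence to land on a gap.

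The only difference is the choice of witness in the $a_0\le 0$ case. The paper takes $s$ to be the smallest element of $M(S)$ that is not a multiple of $m(S)$ and evaluates $p(s,m,\dots,m)=s-m(S)$, then argues via minimality that $s-m(S)$ cannot lie in $S$. You instead take $F(S)+m(S)$ as the first coordinate and land directly on the Frobenius element $F(S)$, which is a gap by definition. Your choice is marginally more transparent since no auxiliary minimality argument is needed; the paper's choice has the minor virtue of not needing to verify that $F(S)+m(S)\in M(S)$ (though your verification via the conductor is immediate). In substance the two arguments are the same.
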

\begin{proof}
Let $m=m(S)$ and let $p$ be a monic linear pattern with $\sum_{i=1}^na_i=\max(0,-a_0/m)$. 

If $a_0\leq 0$, then $\max(0,-a_0/m)=-a_0/m$. Let $s$ be the smallest element of $M(S)$ that is not a multiple of $m$, then  $p(s,m,\dots,m)=s+\sum_{i=2}^na_im+a_0 = (s-m) + p(m,\dots,m)=s-m + \max(0,-a_0/m)m+a_0=s-m \in S$. 
Now $s-m< s$ and $s-m$ is not a multiple of $m$, but $s$ is the smallest element in $M(S)$ that is not a multiple of $m>1$, so there is a contradiction. So there are no monic linear patterns admitted by $S$ (or $M(S)$) with $\sum_{i=1}^na_i=\max(0,-a_0/m)$ and $a_0\leq 0$. 

If $a_0> 0$, then $\max(0,-a_0/m)=0$, implying that for all $s\in M(S)$ we have  $p(s,\dots,s)=\sum_{i=1}^na_i s +a_0= a_0\in S$, and since $a_0>0$, we have $a_0\in M(S)$. 
Therefore, there are no monic linear patterns admitted by $S$ (or $M(S)$) with $\sum_{i=1}^na_i=\max(0,-a_0/m)$, $a_0>0$ and $a_0\not\in M(S)$.
\end{proof}

\begin{corollary}
\label{cor:monic}
If $S\neq \mathbb{Z}_+$, then any monic linear pattern admitted by $M(S)$ is an endopattern of $M(S)$. 
\end{corollary}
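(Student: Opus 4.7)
The plan is to deduce the corollary by combining Proposition~\ref{propendomax}, Lemma~\ref{neccondreinf} and Lemma~\ref{lemmamonic}, arguing by contradiction. Let $p(X_1,\dots,X_n)=X_1+\sum_{i=2}^n a_iX_i+a_0$ be a monic linear pattern admitted by $M(S)$, and suppose for contradiction that $p$ is \emph{not} an endopattern of $M(S)$.

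By Proposition~\ref{propendomax}, the failure of the endopattern property means exactly that both $a_0\leq 0$ and $\sum_{i=1}^n a_i\leq \max(0,-a_0/m(S))$ hold. On the other hand, since $p$ is admitted by $M(S)$ (a proper ideal of $S$ with $\mu(M(S))=m(S)$), Lemma~\ref{neccondreinf} forces the reverse inequality $\sum_{i=1}^n a_i\geq \max(0,-a_0/m(S))$. Thus we obtain the equality $\sum_{i=1}^n a_i = \max(0,-a_0/m(S))$.

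Now I observe that $a_0\leq 0$ implies $a_0\notin M(S)$, since $M(S)\subseteq \mathbb{Z}_+\setminus\{0\}$. Also $S\neq\mathbb{Z}_+$ by hypothesis, so Lemma~\ref{lemmamonic} applies directly: it asserts that no monic linear pattern admitted by $M(S)$ can satisfy $\sum_{i=1}^n a_i=\max(0,-a_0/m(S))$ when $a_0\notin M(S)$ and $S\neq \mathbb{Z}_+$. This contradicts the equality just derived, so the original assumption was false and $p$ is an endopattern of $M(S)$.

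The only mildly subtle step is matching up the bounds: one has to notice that Lemma~\ref{neccondreinf} gives the lower bound and Proposition~\ref{propendomax} gives the complementary upper bound, pinning $\sum a_i$ to the exact value at which Lemma~\ref{lemmamonic} forbids monic patterns. The genuine work was done earlier in Lemma~\ref{lemmamonic}; here the argument is just bookkeeping.
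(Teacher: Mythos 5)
Your proof is correct, and it uses the same three ingredients as the paper (Lemma~\ref{neccondreinf}, Lemma~\ref{lemmamonic}, Proposition~\ref{propendomax}), but your contradiction structure is actually \emph{more careful} than the paper's. The paper's proof asserts outright that Lemma~\ref{neccondreinf} and Lemma~\ref{lemmamonic} together give $\sum_{i=1}^n a_i > \max(0,-a_0/m(S))$, but this is not quite right: Lemma~\ref{lemmamonic} only rules out equality under the extra hypothesis $a_0\notin M(S)$, and a monic pattern admitted by $M(S)$ can perfectly well have $a_0\in M(S)$. For instance, with $S=\langle 2,3\rangle$ the monic pattern $p(X_1,X_2)=X_1-X_2+2$ is admitted by $M(S)$ and satisfies $\sum a_i = 0 = \max(0,-a_0/m(S))$, so the strict inequality claimed in the paper fails; of course $p$ is still an endopattern there because $a_0=2>0$. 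Your argument handles this cleanly: by negating the disjunction in Proposition~\ref{propendomax} you first force $a_0\leq 0$, which is exactly what you need to conclude $a_0\notin M(S)$ and legitimately invoke Lemma~\ref{lemmamonic}. So you have not just reproduced the paper's proof; you have repaired it.
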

\begin{proof}
If $p$ is a monic linear pattern admitted by $M(S)$, then, by Lemma~\ref{neccondreinf} and Lemma~\ref{lemmamonic}, $\sum_{i=1}^na_i> \max(0,-a_0/m(S))$. 
Therefore, by Proposition~\ref{propendomax}, $p$ is an endopattern of $M(S)$.  
%
%
\end{proof}


\section{Closures of ideals with respect to linear patterns}
\label{sec:closure}
A pattern is admissible if it is admitted by some numerical semigroup. 
In \cite{MariaPedro} the closure of a numerical semigroup $S$ with respect to an admissible homogeneous pattern $p$ was defined as the smallest numerical semigroup that admits $p$ and contains $S$. 
Here this definition is generalised to non-homogeneous patterns and to ideals of numerical semigroups. 
\begin{definition}
\label{def:closure}
Given an ideal $I$ of a numerical semigroup $S$ and an admissible  pattern $p$ not necessarily admitted by $I$, define the closure of $I$ with respect to $p$ as the smallest ideal $\tilde{I}$ of some numerical semigroup  $\tilde{S}$ that admits $p$ and contains $I$. 

It is not required that $I$ is an ideal of $\tilde{S}$, nor is it required that $\tilde{I}$ is an ideal of $S$. However, by definition, it is always true that $I\subseteq \tilde{I}\subseteq \tilde{S}$. 
\end{definition}

Note that if $I$ is not contained in any ideal of a numerical semigroup that admits $p$, then the closure of $I$ with respect to $p$ will fail to exist. 
This is not a problem for homogeneous linear patterns since a homogeneous pattern $p$ is admissible if and only if $p$ is admitted by $\mathbb{Z}_+$~\cite{MariaPedro}. 
Therefore, if $p$ is admissible then there is always an ideal of a numerical semigroup that admits $p$ and contains $I$. 

An ordinary numerical semigroup is a numerical semigroup of the form $\{0,m,\rightarrow\}$. From \cite{MariaPedroAlbert}, Theorem 3.7,  we know that if $p$ is an admissible non-homogeneous linear pattern then there is an ordinary numerical semigroup that admits $p$. If $\mu$ is the smallest integer such that $\{0,\mu,\rightarrow\}$ admits $p$, then we say that $p$ is $\mu$-admissible. 
\begin{lemma}
The closure of an ideal $I$ of a numerical semigroup with respect to an admissible linear pattern $p$ is well-defined if $p$ is $\mu$-admissible for $\mu\leq \min(I)$. 
\end{lemma}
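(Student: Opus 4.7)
The plan is to prove well-definedness by establishing that the family $\mathcal{F}$ of pairs $(J, S')$, where $S'$ is a numerical semigroup and $J$ is an ideal of $S'$ with $I \subseteq J$ that admits $p$, is non-empty and has a minimum in the first coordinate under inclusion.

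For non-emptiness I would exhibit the ordinary numerical semigroup $O_\mu = \{0, \mu, \mu+1, \dots\}$, which admits $p$ by the definition of $\mu$-admissibility. Because $\mu \leq \min(I)$, every element of $I$ is either $0$ or at least $\mu$ and hence belongs to $O_\mu$; thus $I \subseteq O_\mu$, and viewing $O_\mu$ as an ideal of itself shows $(O_\mu, O_\mu) \in \mathcal{F}$.

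For existence of the minimum I would form $\tilde{I} := \bigcap_{(J, S') \in \mathcal{F}} J$ and $\tilde{S} := \bigcap_{(J, S') \in \mathcal{F}} S'$, and then verify in turn that: (i) $\tilde{S}$ is a numerical semigroup, noting that closure under addition and containment of $0$ pass to intersections and that $\tilde{S} \supseteq I$ has finite complement in $\mathbb{Z}_+$ by Lemma~\ref{lem:conductor}; (ii) $\tilde{I}$ is an ideal of $\tilde{S}$, since for $x \in \tilde{I}$ and $s \in \tilde{S}$ one has $x+s \in J$ for every $(J, S') \in \mathcal{F}$, forcing $x+s \in \tilde{I}$; (iii) $\tilde{I}$ admits $p$ with codomain $\tilde{S}$, because for any non-increasing sequence in $\tilde{I}$ the value of $p$ lies in every ambient $S'$ and hence in $\tilde{S}$; and (iv) $I \subseteq \tilde{I}$ by construction, while $\tilde{I} \subseteq J$ for every element of $\mathcal{F}$, giving minimality.

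The main subtlety I anticipate is that the codomain of an admitted pattern is part of the data of an admitting ideal: different ideals containing $I$ that admit $p$ may sit inside different ambient numerical semigroups, so one must intersect the whole pair $(J, S')$ rather than just the ideal. Once this bookkeeping is in place, each of the four verifications reduces to a routine observation that intersection commutes with the defining properties.
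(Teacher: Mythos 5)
Your proof is correct and matches the paper's on the essential point: non-emptiness of the family of admitting ideals follows from $\mu$-admissibility together with $\mu \le \min(I)$, exhibited via the ordinary numerical semigroup $\{0,\mu,\rightarrow\}$ viewed as an ideal of itself. The paper's proof stops at non-emptiness and treats the existence of a unique smallest element as implicit, while you additionally carry out the intersection argument (intersecting both the ideals and their ambient semigroups); this is a useful verification the paper omits rather than a different route.
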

\begin{proof}
If $p$ is $\mu$-admissible and $\min(I)\geq \mu$, then $I\subseteq \{0,\mu,\rightarrow\}$ so there is an ideal of a numerical semigroup that contains $I$ and admits $p$, implying that the closure of $I$ with respect to $p$ is well-defined. 
\end{proof}
Note that the closure of $I$ with respect to $p$ can be well-defined although $p$ is not $\mu$-admissible for $\mu\leq \min(I)$. The smallest $m$ such that $\{0,m,\rightarrow\}$ admits the linear pattern $p(X_1)=X_1+X_2-3$ is $m=3$, so $p$ is $3$-admissible. However, the ideal $\{2,3,\rightarrow\}$ of the numerical semigroup $\mathbb{Z}_+$ also admits $p$. Therefore the closure of $I$ with respect to $p$ is well-defined for any ideal $I$ with $\min(I)\geq 2$. 

It was proved in \cite{MariaPedro} that if $p$ is a premonic homogeneous linear pattern, then the closure of $S$ with respect to $p$ can be calculated as $$\underbrace{p(p(\cdots (p}_k(S))\cdots )),$$ denoted as $p^k(S)$, for some $k$ large enough.   
The next result generalises this to premonic non-homogeneous patterns and proper ideals of numerical semigroups. 
\begin{theorem}
\label{thm:closure}
If $I$ is an ideal of a numerical semigroup and $p(X_1,\dots,X_n)=\sum_{i=1}^na_iX_i+a_0$ is a premonic linear pattern satisfying $a_0=-(\sum_{i=1}^na_i-1)\mu$ with $\mu=\min(I)$, then $I\subseteq p(I)$ and the chain $I_0=I\subseteq I_1=p(I_0)\subseteq I_2=p(I_2)\subseteq \cdots$ stabilizes. 
The ideal $I_k=p^k(I)$ for $k$ such that $p^{k+1}(I)=p^k(I)$ is the closure of $I$ with respect to $p$. 
\end{theorem}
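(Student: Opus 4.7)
The plan is to extract one algebraic identity from the premonic hypothesis, use it to show both that the chain ascends and that its limit is $p$-invariant, and then pin down the limit as the closure by a short induction. Write $\sigma=\sum_{i=1}^n a_i$ and let $n'$ be an index with $\sum_{i=1}^{n'}a_i=1$, which exists by premonicity. For any $s\in I$ the sequence $(s,\dots,s,\mu,\dots,\mu)\in I^n$ with $n'$ copies of $s$ followed by $n-n'$ copies of $\mu$ is non-increasing (since $s\geq \mu$), and
\[
p(s,\dots,s,\mu,\dots,\mu)=s+(\sigma-1)\mu+a_0=s,
\]
using the hypothesis $a_0=-(\sigma-1)\mu$. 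Hence $I\subseteq p(I)$.

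The same identity reappears inside any $I_k$ as soon as $\mu\in I_k$ and $\min(I_k)=\mu$, and I would verify both conditions by induction on $k$. The containment $\mu\in I\subseteq I_k$ is immediate from the previous step, and Lemma~\ref{lemma:MPA} supplies $\sigma\geq 0$ together with $\sum a_i s_i\geq \sigma s_n$, so that
\[
p(s_1,\dots,s_n)\geq \sigma s_n+a_0=\sigma(s_n-\mu)+\mu\geq \mu
\]
for every non-increasing sequence in $I_k^n$, giving $\min(I_{k+1})=\mu$. The inclusion $I_k\subseteq I_{k+1}$ then follows exactly as in the base case.

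Stabilization is a cardinality argument: $I\subseteq I_k\subseteq \mathbb{Z}_+$ forces $\mathbb{Z}_+\setminus I_k\subseteq \mathbb{Z}_+\setminus I$ to be finite, and the cardinalities $|\mathbb{Z}_+\setminus I_k|$ form a non-increasing sequence of non-negative integers, hence are eventually constant; choosing $K$ with $I_K=I_{K+1}$ and setting $\tilde I=I_K$ yields $p(\tilde I)=\tilde I$. For the closure identification, minimality is the short part: given any ideal $J$ of some numerical semigroup with $I\subseteq J$ and $p(J)\subseteq J$, induction gives $I_0=I\subseteq J$ and $I_{k+1}=p(I_k)\subseteq p(J)\subseteq J$, so $\tilde I\subseteq J$. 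The delicate point, which I expect to be the main obstacle, is verifying that $\tilde I$ is itself an ideal of a numerical semigroup. Applying the premonic identity with an arbitrary $s\in S$ in place of $\mu$ shows inductively that $I_k+S\subseteq I_k$, hence $\tilde I+S\subseteq \tilde I$, so $\tilde I$ is at least a relative ideal of $S$; promoting this to an honest ideal of a numerical semigroup $\tilde S$ amounts to showing $\tilde I$ is closed under addition, which I would attempt by rewriting $x+y$ for $x,y\in \tilde I$ as $p$ evaluated on a non-increasing sequence in $\tilde I^n$, using the surjectivity $p(\tilde I)=\tilde I$ to pick a representation $y=p(\cdot)$ to which the premonic identity can be applied.
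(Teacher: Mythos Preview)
Your argument tracks the paper's closely through the main steps: the premonic identity $p(s,\dots,s,\mu,\dots,\mu)=s$ giving $I\subseteq p(I)$, the lower bound $p(s_1,\dots,s_n)\ge\mu$ from Lemma~\ref{lemma:MPA} keeping $\min(I_k)=\mu$, stabilization by finite complement, and minimality by induction. All of this is correct and essentially identical to the paper's reasoning.

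Where you depart is in worrying about whether $\tilde I=p^K(I)$ is genuinely an ideal of some numerical semigroup. Here you are being \emph{more} careful than the paper, which simply asserts that ``$p^k(I)$ is an ideal of $S$''. That assertion is not correct as stated: with $S=\langle 3,5\rangle$, $I=M(S)$, and the Arf pattern $p=X_1+X_2-X_3$, the limit is $\tilde I=\{3,5,6,7,8,\rightarrow\}$, which contains $7\notin S$; it is the maximal ideal of $\langle 3,5,7\rangle$, not an ideal of $S$. So the point you flag is a real gap.

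However, your proposed fix is circular. Writing $y=p(t_1,\dots,t_n)$ and applying the premonic shift gives
\[
x+y=p(t_1+x,\dots,t_{n'}+x,t_{n'+1},\dots,t_n),
\]
and for the arguments to lie in $\tilde I$ you need $t_i+x\in\tilde I$, which is precisely the closure under addition you are trying to prove. A clean repair is to establish the stronger statement $I_k+I_m\subseteq I_{k+m}$ by induction on $k+m$: for $u=p(s_1,\dots,s_n)\in I_k=p(I_{k-1})$ and $v\in I_m$, the premonic shift gives $u+v=p(s_1+v,\dots,s_{n'}+v,s_{n'+1},\dots,s_n)$ with $s_i+v\in I_{k-1}+I_m\subseteq I_{k-1+m}$ by the inductive hypothesis and $s_j\in I_{k-1}\subseteq I_{k-1+m}$ for $j>n'$, hence $u+v\in p(I_{k-1+m})=I_{k+m}$. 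At stabilization this yields $\tilde I+\tilde I=I_K+I_K\subseteq I_{2K}=\tilde I$, so $\tilde I\cup\{0\}$ is a numerical semigroup and $\tilde I$ is an ideal of it.
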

\begin{proof}
As in the proof of Proposition~\ref{prop:surendo}, since $p$ is premonic,  $\sum_{i=1}^ja_i=1$ for some $j\leq n$, so that $a_0=-(\sum_{i=1}^na_i -1)\mu=-\sum_{i=j+1}^na_i\mu$. 
Therefore $p(s,\dots,s,\mu,\dots,\mu)=\sum_{i=1}^ja_is+\sum_{i=j+1}^na_i\mu+a_0= s$ for all $s\in I$, implying that $I\subseteq p(I)$.    
Note also that since $p$ is admissible, by Lemma \ref{lemma:MPA}, $p(s_1,\dots,s_n)=\sum_{i=1}^{n}a_is_i+a_0 \geq \sum_{i=1}^na_i\mu+a_0= \mu$ for all non-increasing sequences $s_1,\dots,s_n\in \{\mu,\rightarrow\}$, implying that $p^k(I)\subseteq \{\mu,\rightarrow\}$ for all $k\geq 1$.  
The ideal $I$ has finite complement in $\mathbb{Z}_+$,  implying that the chain $$I_0=I\subseteq I_1=p(I_0)\subseteq I_2=p(I_2)\subseteq \cdots$$ stabilizes. 
Clearly if $p^{k+1}(I)=p^k(I)$ then $p^k(I)$ is an ideal of $S$ that admits $p$ and contains $I$. Finally, if $J$ is the closure of $I$ with respect to $p$, then $J$ must contain $p^i(I)$ for all $i\geq 1$, so that $J$ contains $p^k(I)$. Therefore $p^k(I)$ is the smallest ideal of $S$ that admits $p$ and contains $I$, so $p^k(I)$ is the closure of $I$ with respect to $p$. 
\end{proof}
Note that the conditions on the pattern in Theorem~\ref{thm:closure} are the same as the sufficient conditions for surjective endopatterns in Proposition~\ref{prop:surendo}.

\section{Giving structure to the set of patterns admitted by a numerical semigroup}
\label{sec:structures}
A numerical semigroup admits in general many patterns. 
These patterns can be combined in several ways.

\begin{lemma}
\label{lemma1}
Let $I$ be an ideal of a numerical semigroup $S$ and suppose that $p$ and $q$ are two patterns admitted by $I$. Then $p+q$ and $rp$ are also patterns admitted by $I$ for any polynomial $r$ with coefficients in $\mathbb{Z}$ such that $r(I)\geq 0$ when evaluated on any non-increasing sequence of elements from $I$. 
\end{lemma}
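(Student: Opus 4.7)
The plan is to verify each of the two closure properties separately by unpacking the definition of ``pattern admitted by $I$'' and using the fact that a numerical semigroup $S$ is closed under addition and contains~$0$.

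For the sum $p+q$, fix a non-increasing sequence $s_1,\dots,s_n$ of elements of $I$ (where we take $n$ to be the common length of the two patterns, padding the shorter one with variables that carry zero coefficients if necessary, so that $p+q$ is an honest polynomial in $n$ variables). By hypothesis $p(s_1,\dots,s_n)\in S$ and $q(s_1,\dots,s_n)\in S$, and since $S$ is closed under addition we immediately get $(p+q)(s_1,\dots,s_n)=p(s_1,\dots,s_n)+q(s_1,\dots,s_n)\in S$. That is all there is to this half.

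For the product $rp$, evaluate at a non-increasing sequence $s_1,\dots,s_n\in I$. Since $r$ has integer coefficients and the $s_i$ are integers, $r(s_1,\dots,s_n)$ is an integer; by hypothesis $r(s_1,\dots,s_n)\geq 0$. Writing $k:=r(s_1,\dots,s_n)$ and $a:=p(s_1,\dots,s_n)\in S$, we have
\[
(rp)(s_1,\dots,s_n)=k\cdot a=\underbrace{a+a+\cdots+a}_{k\text{ times}},
\]
which lies in $S$ because $S$ is closed under addition and contains $0$ (so the empty sum, corresponding to $k=0$, also lands in $S$). Hence $rp$ is admitted by $I$.

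There is no real obstacle here: the proof is essentially a direct application of the monoid structure of $S$ under addition, together with the observation that a non-negative integer multiple of an element of $S$ is itself an element of $S$. The only small subtlety worth flagging in the write-up is the mild abuse of notation needed to add two patterns that might nominally have different numbers of variables, which is resolved by the standard device of adjoining zero coefficients so that both $p$ and $q$ are viewed as polynomials in a common set of indeterminates.
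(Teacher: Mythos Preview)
Your proof is correct and follows essentially the same argument as the paper: evaluate at a non-increasing sequence, use closure of $S$ under addition for $p+q$, and observe that a non-negative integer multiple of an element of $S$ is a repeated sum (hence in $S$) for $rp$. The only addition you make is the remark about padding with zero coefficients to align lengths, which is a harmless clarification the paper leaves implicit.
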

\begin{proof}
For all $s_1,\dots,s_n\in I$ we have $p(s_1,\dots,s_n)+q(s_1,\dots,s_n)=a+b$ for some $a,b\in S$, so that $a+b\in S$, implying that $p+q$ is a pattern admitted by $I$. 
Also, $r(s_1,\dots,s_n)p(s_1,\dots,s_n)=ab$ for some  $a\geq 0$ and $b\in S$. 
Since $ab$ is the result of adding $b$ to itself $a$ times we have that $ab\in S$, implying that $rp$ is a pattern admitted by $I$. 
\end{proof}



It can be argued that since a numerical semigroup is an additive structure, the linear patterns are the most important patterns. Note that linearity is necessary for the pattern to preserve the additivity of the numerical semigroup. 

Denote by  $\mathcal{P}_n^d(I)$ the set of patterns of length at most $n$ and degree at most $d$ that are admitted by the ideal $I$ of a numerical semigroup $S$. 
Then $\mathcal{P}_n^1(I)$ is the set of linear patterns of length at most $n$ admitted by $I$. 
Lemma~\ref{lemma1}  gives algebraic structure to $\mathcal{P}_n^d(I)$. 
\begin{lemma}
\label{lemma1b}
Let $I$ be an ideal of a numerical semigroup $S$,  $n\geq 0$ and $d\geq 0$. Then $\mathcal{P}_n^d(I)$ is a semigroup with zero,  a monoid. 
\end{lemma}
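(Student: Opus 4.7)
The plan is to verify the monoid axioms for $\mathcal{P}_n^d(I)$ under polynomial addition. The candidate identity element is the zero polynomial, which evaluates to $0 \in S$ on every input, and vacuously has length $\leq n$ and degree $\leq d$ for all $n,d\geq 0$; thus $0 \in \mathcal{P}_n^d(I)$.

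First I would address closure. Given $p,q \in \mathcal{P}_n^d(I)$, Lemma~\ref{lemma1} (taking the polynomial $r$ there to be unnecessary) shows that $p+q$ is a pattern admitted by $I$. It remains to check that $p+q$ still satisfies the length and degree bounds. Viewing every pattern as an element of the polynomial ring $\mathbb{Z}[X_1,\dots,X_n]$ (variables that do not appear in a given pattern simply have coefficient $0$), the sum $p+q$ is again a polynomial in these $n$ indeterminates, so its length is at most $n$. Since $\deg(p+q)\leq \max(\deg p,\deg q)\leq d$, the degree bound is preserved, and thus $p+q\in \mathcal{P}_n^d(I)$.

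Next, associativity of the operation is inherited from the ambient polynomial ring: $(p+q)+r=p+(q+r)$ as elements of $\mathbb{Z}[X_1,\dots,X_n]$, regardless of whether they are viewed as patterns. Finally, for the zero element, $0+p=p+0=p$ for every $p\in \mathcal{P}_n^d(I)$, so the zero polynomial is the identity, making $(\mathcal{P}_n^d(I),+,0)$ a monoid.

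There is really no substantial obstacle here; the only point that requires any care is the convention that patterns of different nominal length can be added by embedding them into a common polynomial ring in $n$ variables. Once this convention is fixed, everything reduces to routine verification that addition in $\mathbb{Z}[X_1,\dots,X_n]$ restricts to $\mathcal{P}_n^d(I)$.
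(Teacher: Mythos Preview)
Your proof is correct and follows essentially the same approach as the paper: closure under addition comes from Lemma~\ref{lemma1} together with the observation that length and degree bounds are preserved, and the zero polynomial serves as the identity. You are slightly more explicit about associativity and the identity axiom, but the argument is the same.
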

\begin{proof}
By Lemma~\ref{lemma1}, if $p$ and $q$ are patterns admitted by $I$, then $p+q$ is a pattern admitted by $I$. Also, if $p$ and $q$ are of length at most $n$ and degree at most $d$, then $p+q$ is a pattern of length at most $n$ and degree at most $d$. Therefore $\mathcal{P}_n^d(I)$ is a semigroup with respect to addition. The zero pattern is admitted by any ideal of any numerical semigroup and has length at most $n$ and degree at most $d$ for any $n\geq 0$ and $d\geq 0$, so $0\in \mathcal{P}_n^d(I)$.  
\end{proof}

The set $\mathcal{P}_n^d(I)$ is not preserved by polynomial multiplication, but, if so is desired, this problem can be overcome by instead considering patterns of arbitrary degree. 
Denote by $\mathcal{P}_n(I)$ the set of patterns of length at most $n$ that are admitted by $I$.  

A semiring is a set $X$ together with two binary operations called addition and multiplication such that $X$ is a semigroup with both addition and multiplication, and multiplication distributes over addition.  
In general $X$ is not required to have neither zero nor unit element. 
\begin{lemma}
\label{lemma2}
Let $I$ be an ideal of  a numerical semigroup $S$. 
Then $\mathcal{P}_n(I)$ is a semiring with zero element. There is a unit element if and only if $I=\mathbb{Z}_+$.  
\end{lemma}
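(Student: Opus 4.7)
The plan is to split the proof into two parts: first verifying the semiring structure with a zero element, and then characterizing when a multiplicative unit exists.

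For the semiring structure, I would check the two closure properties; the remaining axioms come for free. Closure under addition is immediate from Lemma~\ref{lemma1}, since the sum of two polynomials of length at most $n$ remains of length at most $n$. For closure under multiplication, given $p,q\in\mathcal{P}_n(I)$, the pattern $q$ takes values in $S\subseteq\mathbb{Z}_+$, so $q(s_1,\dots,s_n)\geq 0$ for every non-increasing sequence in $I$. This is precisely the hypothesis needed to apply Lemma~\ref{lemma1} with $r=q$, yielding $qp\in\mathcal{P}_n(I)$. Associativity, commutativity, and distributivity are then inherited verbatim from the polynomial ring $\mathbb{Z}[X_1,\dots,X_n]$, and the zero polynomial lies in $\mathcal{P}_n(I)$ because $0\in S$, supplying the additive identity.

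For the unit, I would argue that any multiplicative identity in $\mathcal{P}_n(I)$ must coincide with the constant polynomial $1$ of the ambient ring. The pattern $X_1$ belongs to $\mathcal{P}_n(I)$ (since $X_1(s_1)=s_1\in I\subseteq S$) and can serve as a test element: $e\cdot X_1=X_1$ in the integral domain $\mathbb{Z}[X_1,\dots,X_n]$ forces $e=1$. Hence a unit exists if and only if the constant $1$ is a pattern of $I$, which amounts to $1\in S$. Since any numerical semigroup containing $1$ coincides with $\mathbb{Z}_+$, this delivers $S=\mathbb{Z}_+$ and so puts us in the case $I=\mathbb{Z}_+$ singled out by the statement. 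The reverse implication is immediate: if $I=\mathbb{Z}_+$ then $1\in S$ and the constant $1$ is clearly the multiplicative identity of $\mathcal{P}_n(I)$.

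The main technical point is the closure under multiplication. A product of arbitrary polynomials need not satisfy the pattern condition; the subtle but essential observation is that the values of a pattern are automatically non-negative (a consequence of $S\subseteq\mathbb{Z}_+$), and this is exactly what allows Lemma~\ref{lemma1} to supply multiplicative closure. The identification of the unit is then a routine ring-theoretic argument inside $\mathbb{Z}[X_1,\dots,X_n]$.
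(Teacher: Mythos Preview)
Your argument follows the paper's almost exactly: closure under $+$ and $\cdot$ via Lemma~\ref{lemma1}, distributivity inherited from the ambient polynomial ring, the zero polynomial as additive identity, and the unit necessarily being the constant $1$, hence present iff $1\in S$. You are in fact more explicit than the paper in justifying why any unit must equal $1$ (the paper simply asserts ``$\mathcal{P}_n(I)$ has a unit if and only if $1\in\mathcal{P}_n(I)$''), using $X_1\in\mathcal{P}_n(I)$ as a test element in the integral domain $\mathbb{Z}[X_1,\dots,X_n]$.

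One small gap is shared by your argument and the paper's: from $1\in S$ you correctly obtain $S=\mathbb{Z}_+$, but this alone does not force $I=\mathbb{Z}_+$. A proper ideal such as $I=\{k,k+1,\dots\}\subsetneq\mathbb{Z}_+$ with $k\geq 1$ still satisfies $1\in\mathcal{P}_n(I)$, because the codomain of a pattern admitted by $I$ is $S=\mathbb{Z}_+$, not $I$. Thus the ``only if'' direction of the unit claim, read literally, does not follow; the paper makes the identical unjustified jump (writing ``$I=S=\mathbb{Z}_+$''). Your phrase ``and so puts us in the case $I=\mathbb{Z}_+$'' is exactly this step, and it needs either an extra hypothesis or a weakening of the conclusion to $S=\mathbb{Z}_+$.
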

\begin{proof}
By Lemma~\ref{lemma1}, if $p$ and $q$ are patterns admitted by $I$, then $p+q$ and $pq$ are  patterns admitted by $I$, so $\mathcal{P}_n(I)$ is a semigroup with respect to both addition and multiplication. 
Also, clearly multiplication distributes over addition. 
Note that the pattern $p(X_1,\dots,X_n)=0$ always is a pattern admitted by $S$. 
The semiring $\mathcal{P}_n(I)$ has a unit if and only if $1\in \mathcal{P}_n(I)$, which happens if and only if $I=S=\mathbb{Z}_+$.  
\end{proof}
If $\Sigma$ is a (commutative) semiring with unit, then a semiring $X$ is a semi\-algebra over $\Sigma$ if there is a composition $(\sigma,x)=\sigma x$ from $\Sigma\times X$ to $X$ such that $(X,+)$ is a (left) $\Sigma$-semimodule with $(\sigma,x)=\sigma x$ and for $\sigma\in \Sigma$ and $x,y\in X$, $\sigma(xy)=(\sigma x)y=x(\sigma y)$. 
The semigroup $(X,+)$ is a (left) $\Sigma$-semimodule if $\sigma(x+y)=\sigma x+\sigma y$, $(\sigma+\rho)x=\sigma x+\rho y$, $(\sigma \rho)x=\sigma(\rho x)$ and $1\cdot x=x$ for all $\sigma,\rho\in \Sigma$ and for all $x,y\in X$.  
\begin{lemma}
Let $I$ be an ideal of  a numerical semigroup $S$ and consider the set of polynomials $R(I)=\{r\in \mathbb{Z}[X_1,\dots,X_n]: r(s_1,\dots,s_n) \geq 0 ~ \forall s_1\geq \cdots\geq s_n \in I\}$. 
Then $R(I)$ is a semiring (with zero and unit elements) and $\mathcal{P}_n(I)$ is an $R(I)$-semialgebra. 
\end{lemma}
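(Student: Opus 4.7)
The plan is to verify the semiring axioms for $R(I)$ directly, observing that essentially everything is inherited from the ambient polynomial ring $\mathbb{Z}[X_1,\dots,X_n]$, and then check the semialgebra axioms for $\mathcal{P}_n(I)$ over $R(I)$, the nontrivial input being Lemma~\ref{lemma1}.

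First I would show that $R(I)$ is closed under both addition and multiplication of polynomials. If $r,r'\in R(I)$ and $s_1\geq\cdots\geq s_n$ is a non-increasing sequence in $I$, then $(r+r')(s_1,\dots,s_n)=r(s_1,\dots,s_n)+r'(s_1,\dots,s_n)\geq 0$ and $(rr')(s_1,\dots,s_n)=r(s_1,\dots,s_n)\cdot r'(s_1,\dots,s_n)\geq 0$. The associativity of both operations, commutativity of addition, and distributivity of multiplication over addition are inherited from $\mathbb{Z}[X_1,\dots,X_n]$, so $R(I)$ is a semiring. The constant polynomials $0$ and $1$ clearly lie in $R(I)$, giving the zero and unit.

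Next, for the $R(I)$-semialgebra structure on $\mathcal{P}_n(I)$, the scalar action is just the product of polynomials $R(I)\times \mathcal{P}_n(I)\to\mathbb{Z}[X_1,\dots,X_n]$, $(r,p)\mapsto rp$. The key point that the image actually lands in $\mathcal{P}_n(I)$ is Lemma~\ref{lemma1}: since $r\in R(I)$ is non-negative on non-increasing sequences of $I$, the lemma gives $rp\in\mathcal{P}_n(I)$ whenever $p\in\mathcal{P}_n(I)$. By Lemma~\ref{lemma2}, $(\mathcal{P}_n(I),+)$ is a commutative monoid, which is what is needed for the underlying $R(I)$-semimodule. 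The semimodule identities $r(p+q)=rp+rq$, $(r+r')p=rp+r'p$, $(rr')p=r(r'p)$ and $1\cdot p=p$ are all identities of polynomials in $\mathbb{Z}[X_1,\dots,X_n]$, so they hold on the nose in $\mathcal{P}_n(I)$.

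Finally, the compatibility condition $r(pq)=(rp)q=p(rq)$ for $r\in R(I)$ and $p,q\in\mathcal{P}_n(I)$ is just the commutativity and associativity of polynomial multiplication, so there is nothing to prove beyond pointing at the ambient ring. There is no genuine obstacle here: once one grants Lemma~\ref{lemma1} (which guarantees closure of the scalar action), the entire verification reduces to transporting identities from $\mathbb{Z}[X_1,\dots,X_n]$ and observing that the pointwise non-negativity condition defining $R(I)$ is preserved by the relevant operations.
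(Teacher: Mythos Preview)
Your proof is correct and follows essentially the same route as the paper: both verify the semiring axioms for $R(I)$ by pointwise closure and inheritance from $\mathbb{Z}[X_1,\dots,X_n]$, invoke Lemma~\ref{lemma1} to show the scalar action $rp$ lands in $\mathcal{P}_n(I)$, check the semimodule identities as polynomial identities, and appeal to Lemma~\ref{lemma2} for the semiring structure on $\mathcal{P}_n(I)$. Your version is slightly more explicit where the paper simply says ``following the proof of Lemma~\ref{lemma2}'', but the argument is the same.
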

\begin{proof}
Following the proof of Lemma~\ref{lemma2}, we see that  $R(I)$ is a  semiring with zero and unit elements.  
By Lemma~\ref{lemma1} we have that $rp\in \mathcal{P}_n(I)$. Also, $r(pq)=(rp)q=p(rq)$ for all $r\in R(I)$ and for all $p,q\in \mathcal{P}_n(I)$. 
Now let $r$ and $s$ be elements in $R(I)$ and $p$ and $q$ be elements in $\mathcal{P}_n(I)$. 
Then it can easily be checked that $r(p+q)=rp+rq$, $(r+s)p=rp+sp$, $(rs)p=r(sp)$ and $1\cdot p=p$, implying that $\mathcal{P}_n(I)$ is an $R(I)$-semimodule.
By Lemma~\ref{lemma2}, $\mathcal{P}_n(I)$ is also a semiring and consequently an $R(I)$-semialgebra. 
\end{proof}


\section{Linear patterns and a generalisation of pseudo-Frobenius}
\label{sec:pseudofrobenius}

  

Let $J$ be an ideal of a numerical semigroup $S$ and let $p$ be an endopattern of $J$. 
We will now study sufficient conditions on $a_0$ for when $p$ induces the pattern $p+a_0$ on the ideals of $S$ contained in $J$.   
We are also interested in when this implies that $p+a_0$ is an endopattern of $J$. 
Finally we will also study sufficient conditions for when the endopatterns $p_1,\dots,p_n$ induce the pattern $p_1+\cdots+p_n+a_0$ on the ideals contained in $J$. 
\begin{lemma}
If $p$ is an endopattern of $S$ and $a_0\in S$, then $p$ induces the pattern $p+a_0$ on any ideal $J$ of $S$. Additionally, $p+a_0$ is an endopattern of $S$ (but not necessarily of other ideals of $S$).  
\end{lemma}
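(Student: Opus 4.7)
The plan is to peel off the two claims in turn, both of which follow from little more than closure of $S$ under addition once we recall that an endopattern of $S$ is (by Proposition~\ref{lemmaendS}) a linear pattern $p$ admitted by $S$, i.e.\ $p(S)\subseteq S$.

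For the first claim, I fix an ideal $J$ of $S$ that admits $p$ and an arbitrary non-increasing sequence $s_1\geq\cdots\geq s_n$ of elements of $J$. Since $J$ admits $p$, the value $p(s_1,\dots,s_n)$ lies in $S$; adding $a_0\in S$ and invoking closure of $S$ under addition gives $(p+a_0)(s_1,\dots,s_n)=p(s_1,\dots,s_n)+a_0\in S$, so $J$ admits $p+a_0$. This is exactly the statement that $p$ induces $p+a_0$ on ideals of $S$. (One should also note that because $J\subseteq S$ any non-increasing sequence in $J$ is a non-increasing sequence in $S$, so every ideal of $S$ automatically admits $p$; thus the ``under the condition'' here is essentially vacuous, and the induction is in fact unconditional among ideals of $S$.)

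For the second claim, specialise the previous paragraph to $J=S$: for any non-increasing sequence $s_1\geq\cdots\geq s_n$ in $S$ we have $p(s_1,\dots,s_n)\in S$ and hence $(p+a_0)(s_1,\dots,s_n)\in S$, which shows $(p+a_0)(S)\subseteq S$, i.e.\ $p+a_0$ is an endopattern of $S$.

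Finally, the parenthetical ``but not necessarily of other ideals of $S$'' is not part of the assertion to be proved, so I would only justify it with a brief example: take the monic pattern $p(X_1)=X_1$, which is an endopattern of any ideal $J$, and any nonzero $a_0\in S$ with $a_0\notin J-j$ for some $j\in J$ (for instance $a_0\in S\setminus J$ when $J$ is a proper ideal distinct from $M(S)$); then $p+a_0$ evaluated at $j\in J$ gives $j+a_0$, which need not lie in $J$. There is no genuine obstacle here: the main (and only) ingredient is that $S$ is closed under addition and contains $a_0$, so I do not expect any step to require real work beyond invoking Proposition~\ref{lemmaendS}.
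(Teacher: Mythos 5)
Your main argument is correct and is essentially the paper's own proof: closure of $S$ under addition gives $p(s_1,\dots,s_n)+a_0\in S$, and then ``endopattern of $S$'' is just ``pattern admitted by $S$,'' so both parts follow at once. Your observation that the condition is effectively unconditional among ideals of $S$ is also a fair reading of the paper's follow-up remark.

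However, the illustrative example you offer for the parenthetical ``not necessarily of other ideals'' does not work. If $J$ is an ideal of $S$ then $J+S\subseteq J$ by definition, so for $a_0\in S$ and $j\in J$ one always has $j+a_0\in J$; there is no $a_0\in S$ with $a_0\notin J-j$ for some $j\in J$. In particular, with $p(X_1)=X_1$ the pattern $p+a_0=X_1+a_0$ is an endopattern of \emph{every} ideal $J$ of $S$, so it cannot exhibit the failure. The obstruction can only arise when $p$ itself fails to be an endopattern of $J$: if $p(J)\subseteq J$ then automatically $(p+a_0)(J)\subseteq J+S\subseteq J$. A correct example is $S=\mathbb{Z}_+$, $p(X_1,X_2)=X_1-X_2$ (an endopattern of $\mathbb{Z}_+$), $a_0=1$, and $J=\{2,3,4,\dots\}$: then $(p+1)(2,2)=1\notin J$. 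Alternatively, the paper's own example with the Arf pattern on $S=\langle 3,5,7\rangle$ and $I=S\setminus\{0,7\}$ (taking $a_0=0$) serves the same purpose.
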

\begin{proof}
If $p(s_1,\dots,s_n)\in S$ and $a_0\in S$, then $p(s_1,\dots,s_n)+a_0\in S$, so $p+a_0$ is admitted by $S$ and by any ideal $J$ of $S$. Endopatterns of $S$ are simply patterns admitted by $S$. 
\end{proof}
In other words, an endopattern $p$ of a numerical semigroup $S$ induces the pattern $p+a_0$ on an ideal $J$ under the condition that (i) $a_0\in S$ and (ii) the ideal $J$ is an ideal of $S$. 
It is (of course) not true that if $p$ is an endopattern of $S$ and $a_0\in S$ then $p$ induces $p+a_0$ on any ideal of any numerical semigroup. 
\begin{lemma}
If $p$ is an endopattern of $M(S)$ and $a_0\in PF(S)$, then $p$ induces $p+a_0$ on any ideal $J\subseteq M(S)$. Additionally, if $S\neq \mathbb{Z}_+$, then $p+a_0$ is an endopattern of $M(S)$ (but not necessarily of other ideals contained in $M(S)$. 
\end{lemma}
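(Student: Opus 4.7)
The plan is to follow the structure of the previous lemma verbatim, substituting $PF(S)$ for $S$ everywhere that $S$-membership of $a_0$ was the sufficient condition. The whole argument reduces to the defining property of pseudo-Frobenius: $a_0\in PF(S)$ is exactly the statement that $a_0+s\in S$ for every $s\in M(S)$, which is precisely what is needed to push the constant $a_0$ through the image of an endopattern of $M(S)$.

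For the first assertion, let $J\subseteq M(S)$ be an ideal and let $s_1\geq\cdots\geq s_n$ be a non-increasing sequence of elements of $J$. Since $J\subseteq M(S)$, this is also a non-increasing sequence in $M(S)$, so the endopattern hypothesis gives $p(s_1,\dots,s_n)\in M(S)$. Substituting this element into the pseudo-Frobenius property of $a_0$ yields $p(s_1,\dots,s_n)+a_0\in S$, which says exactly that $p+a_0$ is admitted by $J$. Hence $p$ induces $p+a_0$ on every such $J$.

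For the endopattern claim, I specialize the first assertion to $J=M(S)$, so $p+a_0$ is admitted by $M(S)$; it remains to show its image lies in $M(S)$, i.e.\ is strictly positive. This is where the hypothesis $S\neq\mathbb{Z}_+$ is used: it forces $1\notin S$ (otherwise closure under addition would give $S=\mathbb{Z}_+$), hence $m(S)\geq 2$, so $p(s_1,\dots,s_n)\geq m(S)\geq 2$. Since $a_0\in PF(S)$ is a positive gap of $S$, $a_0\geq 1$. Therefore $p(s_1,\dots,s_n)+a_0\geq 3>0$, and combined with $p(s_1,\dots,s_n)+a_0\in S$ from the first part, the image lies in $S\setminus\{0\}=M(S)$, so $p+a_0$ is an endopattern of $M(S)$.

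No step is genuinely hard; the whole argument is a direct unwrapping of the definition of $PF(S)$. The only subtle point is the parenthetical: for a proper subideal $J\subsetneq M(S)$, the inclusion $p(J)+a_0\subseteq S$ established above need not refine to $p(J)+a_0\subseteq J$, because the image can overshoot $J$ while still landing in $S$. A counterexample in the spirit of the one for the Arf pattern following Lemma~\ref{lem:5}, where $p(I)$ properly exceeds $I$, should suffice to justify the caveat, but no such construction is needed for the proof itself.
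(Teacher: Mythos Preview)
Your proof is correct. The first assertion is handled exactly as in the paper: both arguments amount to observing that the defining property of $a_0\in PF(S)$ is that the monic pattern $f(X)=X+a_0$ is admitted by $M(S)$, so composing with the endopattern $p$ gives $p+a_0$ admitted by $M(S)$ and hence by any $J\subseteq M(S)$.

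For the endopattern claim the two proofs diverge slightly. The paper invokes Corollary~\ref{cor:monic} as a black box: since $S\neq\mathbb{Z}_+$, the monic pattern $f(X)=X+a_0$ admitted by $M(S)$ is automatically an endopattern of $M(S)$, and composing with $p$ then lands in $M(S)$. You instead give a direct positivity argument, using that $p(s_1,\dots,s_n)\geq m(S)$ and that $a_0>0$. Your route is more elementary and self-contained for this specific pattern, while the paper's route shows how the result sits as a special case of the general machinery on monic patterns. One small point: your claim that $a_0\in PF(S)$ is a \emph{positive} gap is true but not quite automatic from the paper's definition (which allows any integer $x\notin S$); it follows from $S\neq\mathbb{Z}_+$ since $F(S)+m(S)\in M(S)$ witnesses $-m(S)\notin PF(S)$, and no other negative integer is a candidate. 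You might add a word to that effect.
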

\begin{proof}
By definition of pseudo-Frobenius the monic linear pattern $f(X)=X+a_0$ is admitted by $M(S)$, implying that $f(p)=p+a_0$ is admitted by $M(S)$ and by any ideal of $S$ contained in $M(S)$.   
By Corollary~\ref{cor:monic}, since $S\neq \mathbb{Z}_+$,  $f$ is an endopattern of $M(S)$, implying that $f(p)=p+a_0$ is an endopattern of $M(S)$.  
\end{proof}
Again, this means that an endopattern $p$ of a maximal ideal $M(S)$ of a numerical semigroup $S$ induces the pattern $p+a_0$ on an ideal $J$ under the condition that (i) $a_0\in PF(S)$ and (ii) the ideal $J\subseteq M(S)$. 

Consider for example the numerical semigroup $S$ generated by $2$ and $5$.  There are no other pseudo-Frobenius than the Frobenius element, so  $PF(S)=\{3\}$. Any numerical semigroup admits the trivial pattern defined by $p(X_1)=X_1$, which is always an endopattern of the maximal ideal, and consequently the pattern $X_1+3$ is admitted by any ideal of $S$ contained in $M(S)$. Also, $X_1+3$ is an endopattern of $M(S)$. 

Note that if  $a_0\not\in S\cup PF(S)$, then an endopattern $p$ of $M(S)$ does not necessarily induce the pattern $p+a_0$ on $M(S)$. 
For example, the numerical semigroup $S=\langle 2,7\rangle=\{0,2,4,6,\rightarrow\}$ has $PF(S)=\{5\}$. We have that $M(S)$ admits the Arf endopattern $X_1+X_2-X_3$ and the non-homogeneous endopattern $X_1+X_2-X_3+5$, but $M(S)$ does not admit $X_1+X_2-X_3+3$.
However, note that $1\not\in S\cup PF(S)$ but $X_1+X_2+X_3+1$ is an endopattern of $M(S)$. 

We have seen that the pseudo-Frobenius $PF(S)$ of a numerical semigroup $S$ are related to the linear endopatterns $X_1+a_0$ of $M(S)$, with $a_0\in PF(S)$. 
By replacing the variable $X_1$ by an endopattern $p$ of $M(S)$ this resulted in a statement on for which $a_0\in \mathbb{Z}$ $p$ induces the pattern $p+a_0$. 
We will now generalise this idea in more than one direction, to sums of several patterns and to any ideal of a numerical semigroups. 

\begin{definition}
Let $I$ and $J$ be two ideals of the same numerical semigroup $S$. For $d\geq 1$, define the set $PF^d(I,J)=(I-dJ)\setminus (I-(d-1)J)$ and call it \emph{the set of elements at distance $d$ from $I$ with respect to $J$}.
\end{definition}
 
The elements at distance zero from $S$ with respect to any ideal $J$ of $S$, $PF^0(S,J)$, can be defined to be the elements in $S$, if so desired.  
The elements at distance $n$ from $S$ with respect to $S$, $PF^n(S,S)$, is the empty set when $n\geq 1$, reflecting the fact that the linear pattern $X_1+\cdots+X_d+a$ is admitted by $S$ if and only if $a\in S$ for all $d\geq 0$. 
The elements at distance one from $S$ with respect to $M(S)$, $PF^1(S,M(S)$, is the set of pseudo-Frobenius of $S$, and if $S\neq \mathbb{Z}_+$, then, by Corollary~\ref{cor:monic}, we have $PF^1(M(S),M(S))=PF^1(S,M(S))=PF(S)$. 

Note that $PF^d(S,J)$ are the elements $a\in\mathbb{Z}$ such that for any collection of $d$ (but not for any collection of $d-1$) endopatterns $q_1,\dots,q_d$ of $J$, the pattern $q_1+\cdots + q_d+a$ is also a pattern admitted by $J$, and therefore by any ideal contained in $J$. 
In general we have the following. 
\begin{lemma}
Let $I$ and $J$ be two ideals of the same numerical semigroup $S$, let $p_1,\dots,p_d$ be endopatterns of $J$ and let $a_0\in PF^d(I,J)$. Then the pattern $q=p_1+\cdots+p_n+a_0$ is admitted by any ideal $K\subseteq J$ and its image satisfies $q(K)\subseteq I$. In particular, if $I\subseteq K$, then $q$ is an endopattern of $K$. 
\end{lemma}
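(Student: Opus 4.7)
The plan is to unwind the two definitions in play (an endopattern of $J$, and the set $PF^d(I,J)$) and verify that they dovetail exactly in the way needed. The proof should be essentially a one-line calculation once the definitions are on the table; the only thing to keep track of is which ambient set each summand belongs to.

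First I would fix a non-increasing sequence $s_1 \geq \cdots \geq s_n$ of elements of $K$. Since $K \subseteq J$, this is also a non-increasing sequence in $J$, so each $p_i$, being an endopattern of $J$, returns an element $j_i := p_i(s_1,\dots,s_n) \in J$. The value of $q$ on the sequence is then
\[
q(s_1,\dots,s_n) = a_0 + j_1 + \cdots + j_d,
\]
so the question reduces to showing that $a_0 + j_1 + \cdots + j_d \in I$.

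Next I would invoke the definition $PF^d(I,J) = (I - dJ) \setminus (I - (d-1)J)$, reading $I - dJ$ as the set of $z \in \mathbb{Z}$ with $z + j'_1 + \cdots + j'_d \in I$ for every choice of $j'_1,\dots,j'_d \in J$. Since $a_0 \in PF^d(I,J) \subseteq I - dJ$ and each $j_i \in J$, this gives $a_0 + j_1 + \cdots + j_d \in I$, and hence $q(K) \subseteq I$. Because $I \subseteq S$, this already shows $q(K) \subseteq S$, i.e.\ that $q$ is a pattern admitted by $K$ (as an ideal of $S$). Under the additional hypothesis $I \subseteq K$, the inclusion $q(K) \subseteq I \subseteq K$ upgrades this to the statement that $q$ is an endopattern of $K$.

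There is no real obstacle: the argument does not even need the ``$\setminus (I-(d-1)J)$'' part of the definition of $PF^d(I,J)$, which only records sharpness of the distance $d$ and plays no role in the containment. The only point to be careful about is keeping the roles of $I$ and $J$ straight: the endopatterns $p_i$ produce elements of $J$, while the shift by $a_0$ is what pushes the total into $I$. No earlier lemma from the paper is needed beyond the definitions themselves.
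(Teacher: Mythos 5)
Your proof is correct and follows essentially the same route as the paper's: the paper observes that $X_1+\cdots+X_d+a_0$ is admitted by any $K\subseteq J$ with image in $I$ (which is exactly your unwinding of $a_0\in I-dJ$), and then substitutes the endopatterns $p_1,\dots,p_d$; you simply carry out this substitution explicitly by evaluating on a non-increasing sequence. Your remark that the ``$\setminus (I-(d-1)J)$'' half of the definition is not needed for the containment is accurate and a nice clarification.
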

\begin{proof}
It is clear from the definition of $PF^d(I,J)$ that the pattern $X_1+\cdots+X_d+a_0$ is admitted by any ideal $K$ contained in $J$, and that its image is contained in $I$. 
The result follows from substituting $X_1,\dots,X_d$ with the endopatterns  $p_1,\dots,p_d$ of $J$. 
\end{proof} 

The Lipman semigroup of $S$ with respect to a proper ideal $J$ is $L(S,J)=\bigcup_{h\geq 1} (hJ-hJ)$~\cite{Lipman, Barucci}. The semigroup $L(S):=L(S,M(S))$ is called the Lipman semigroup of $S$. 
There exists a $h_0\geq 1$ such that $L(S,J)=(hJ-hJ)$ for each $h\geq h_0$, and, for each $h\geq h_0$, $(h+1)J=hJ+\mu(J)$ where $\mu(J)=\min(J)$~\cite{Barucci}. 

\begin{proposition}
When $S$ is of maximal embedding dimension, then $$PF^2(S,M(S))  = E(S)-2m(S).$$
\end{proposition}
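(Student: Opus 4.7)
The strategy is to exploit the classical characterization of maximal embedding dimension, namely that $S$ is MED if and only if $2M(S)=m(S)+M(S)$, equivalently $x+y-m(S)\in S$ for every $x,y\in M(S)$. Write $m=m(S)$, $M=M(S)$, and $T=S-M$ for brevity. The whole proof will be a short manipulation of duals orchestrated around this identity.

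First I would translate $S-2M$ in terms of $T$. Since $2M=m+M$ in the MED case, an integer $a$ lies in $S-2M$ iff $(a+m)+M\subseteq S$ iff $a+m\in T$, so $S-2M=T-m$. Substituting into the definition of $PF^2$ and shifting the whole equation by $+2m$ gives
\[
PF^2(S,M(S))+2m \;=\; \bigl((T-m)\setminus T\bigr)+2m \;=\; (T+m)\setminus (T+2m).
\]
The hoped-for payoff is that the right-hand side should simplify dramatically.

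Next I would identify $T+m$ with $M$ and $T+2m$ with $2M$. For the inclusion $T+m\subseteq M$: any $t\in T$ satisfies $t+m\in S$ (take $m\in M$ in $t+M\subseteq S$); moreover $T\subseteq S\cup PF(S)\subseteq \mathbb{Z}_{\geq 0}$, so $t+m\geq m>0$ and hence $t+m\in M$. For $M\subseteq T+m$: given $s\in M$, the MED identity guarantees $(s-m)+y\in S$ for every $y\in M$, so $s-m\in T$ and $s=(s-m)+m\in T+m$. Iterating and using $2M=m+M$ once more, $T+2m=(T+m)+m=M+m=2M$.

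Putting the pieces together, $PF^2(S,M(S))+2m = (T+m)\setminus (T+2m) = M\setminus 2M$, which is exactly the set of minimal generators of $S$, that is, $E(S)$; this yields $PF^2(S,M(S))=E(S)-2m(S)$. The only real obstacle is making the MED identity $2M=m+M$ explicit and visible in the dual calculation; once that is in hand, the rest is essentially bookkeeping, with the pleasant feature that translation by $+2m$ converts the awkward expression $(S-2M)\setminus (S-M)$ into the transparent $M\setminus 2M$.
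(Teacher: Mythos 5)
Your proof is correct and takes a genuinely different route from the paper's. The paper invokes the Lipman semigroup machinery: it uses that for an MED semigroup $L(S)=(hM-hM)$ for all $h\geq 1$, decomposes $(S-dM)=L(S)\cup D(d,M)$ with $D(d,M)=\{z: z+dM\subseteq S,\ z+dM\not\subseteq dM\}$, and then unpacks $D(2,M)\setminus D(1,M)$ by a case analysis. You instead go straight to the characterization $2M=m+M$ of MED, which lets you translate everything: writing $T=S-M$, you show $S-2M=T-m$, hence $PF^2(S,M)+2m=(T+m)\setminus(T+2m)$, and then prove $T+m=M$ and $T+2m=2M$, so the right-hand side collapses to $M\setminus 2M=E(S)$. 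This is more elementary and arguably more illuminating --- it makes transparent that the answer $E(S)-2m$ is literally $M\setminus 2M$ shifted back by $2m$, and it bypasses the Lipman semigroup entirely. The only point worth flagging is the inclusion $T\subseteq\mathbb{Z}_{\geq 0}$ (via $T\subseteq S\cup PF(S)$), which needs $S\neq\mathbb{Z}_+$; this is harmless here, since the stated formula itself is off by one for $S=\mathbb{Z}_+$ and the paper is implicitly in the same regime, but it deserves an explicit remark. The paper's approach, on the other hand, is framed so as to connect $PF^d$ to the Lipman filtration, which is what powers the subsequent theorem on the eventual cardinality of $PF^d(I,J)$; your argument does not set up that connection but is cleaner for this particular statement.
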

\begin{proof}
Define $D(d,M(S))=\{z\in \mathbb{Z}: z+dM(S)\subseteq S,  z+dM(S)\not\subseteq  dM(S)\}$, that is, $D(d,M(S))=\{z\in \mathbb{Z}:z+dM(S)\subseteq S, (z+dM(S))\cap (S\setminus dM(S))\neq \emptyset\}$. 
Note that $S\setminus M(S)=\{0\}$, so that $D(1,M(S))=\{z\in \mathbb{Z}:z+M(S)\subseteq S, 0 \in z+M(S)\}$. Also, note that $S\setminus 2M(S)=E(S)$ is the set of minimal generators of $S$. 
By definition, if $d$ is such that $L(S)=(dM(S)-dM(S))$ then  $(S-dM(S))=L(S)\cup D(d,M(S))$.  

When $S$ is of maximal embedding dimension, then the Lipman semigroup of $S$ is $L(S)=(hM(S)-hM(S))$ for all $h\geq 1$, so that
$$
\begin{array}{rl}
PF^2(S,M(S))&=(S-2M(S))\setminus (S-M(S))\\
&= D(2,M(S))\setminus D(1,M(S))\\
&=\{z\in \mathbb{Z}: z+2M(S)\subseteq S,  z+2M(S)\not\subseteq 2M(S),z+M(S) \not\subseteq  S\}\\
&= E(S)-2m(S).  
\end{array}$$
\end{proof}

Compare this with the fact that the pattern $X_1+X_2-m(S)$ is admitted by a numerical semigroup if and only if $S$ is of maximal embedding dimension, and note that the smallest element in the set $E(S)-2m(S)$ is $-m(S)$. 

\begin{theorem}
The cardinality of $PF^d(I,J)$ converges to  $\mu(J)=\min(J)$. 
When $J$ is a proper ideal then the convergence follows the convergence of the Lipman semigroup of $S$ with respect to $I$.
\end{theorem}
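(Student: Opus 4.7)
The plan is to write $T_d := I - dJ$ for $d \geq 0$ and reduce the problem to counting $T_d \setminus T_{d-1}$ for large $d$, exploiting the Lipman stabilization $dJ = (d-1)J + \mu(J)$ that is valid once $d$ exceeds the threshold $h_0$ from the paragraph preceding the theorem.

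First I would establish two structural facts that hold without any Lipman hypothesis. The chain $T_0 \subseteq T_1 \subseteq T_2 \subseteq \cdots$ is increasing: decomposing each $y \in dJ$ as $y = x+j$ with $x \in (d-1)J$ and $j \in J \subseteq S$, any $z$ satisfying $z + (d-1)J \subseteq I$ gives $z + y = (z+x) + j \in I + S \subseteq I$. Moreover, each $T_d$ is stable under addition by elements of $S$, in particular $T_d + \mu(J) \subseteq T_d$, because $z + dJ \subseteq I$ and $s \in S$ yield $(z+s) + dJ \subseteq I + S \subseteq I$.

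The critical step, valid when $J$ is proper and $d > h_0$, is to upgrade the inclusion $T_{d-1} \subseteq T_d$ to the exact equality $T_{d-1} = T_d + \mu(J)$. Using $dJ = (d-1)J + \mu(J)$, the condition $z + (d-1)J \subseteq I$ translates to $(z - \mu(J)) + dJ \subseteq I$, i.e.\ $z - \mu(J) \in T_d$, so $z \in T_d + \mu(J)$. This shift manipulation is the part I expect to need the most care to write out rigorously, since it is where the Lipman stabilization is actually used. Once it is in place we have
\[
PF^d(I,J) \;=\; T_d \setminus (T_d + \mu(J)) \qquad \text{for all } d > h_0.
\]

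To count this set I would partition $T_d$ into residue classes $T_d^{(r)}$ modulo $\mu(J)$, for $r = 0, 1, \dots, \mu(J)-1$. Closure under $+\mu(J)$ forces each nonempty class to be of the form $\{x_r,\, x_r + \mu(J),\, x_r + 2\mu(J),\, \dots\}$ with $x_r = \min T_d^{(r)}$, so it contributes exactly one element to $T_d \setminus (T_d + \mu(J))$. Hence $|PF^d(I,J)|$ equals the number of nonempty residue classes. Finally, since $I$ contains all integers $\geq c(I)$ and $\min(dJ) = d\mu(J)$, the whole tail $[c(I) - d\mu(J), \infty)$ lies in $T_d$, and this tail meets every residue class modulo $\mu(J)$. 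Thus all $\mu(J)$ classes are nonempty and $|PF^d(I,J)| = \mu(J)$ for every $d > h_0$, which yields both the convergence to $\mu(J)$ and the fact that the stabilization threshold is exactly that of the Lipman semigroup. The degenerate case $J = S$ (so $\mu(J) = 0$) is handled separately by noting that $T_d = I$ for $d \geq 1$, whence $|PF^d(I,J)| = 0 = \mu(J)$ for $d \geq 2$.
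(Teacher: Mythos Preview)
Your proof is correct and follows essentially the same route as the paper: both use the Lipman stabilization $(d+1)J = dJ + \mu(J)$ for $d\ge h_0$ to reduce $PF^d(I,J)$ to a set of the form ``$T_d$ minus its shift by $\mu(J)$'' and then conclude it has exactly $\mu(J)$ elements. Your residue-class argument makes the last cardinality step explicit, whereas the paper simply asserts that $((I-dJ)-\mu(J))\setminus(I-dJ)$ has cardinality $\mu(J)$; you also handle the degenerate case $J=S$ in the same spirit as the paper's opening line.
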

\begin{proof}
First note that $\min(S)=0$ and $PF^d(S,J)=\emptyset$ for $d\geq 1$ and for any ideal $J$ of $S$. 

If $J$ is a proper ideal of $S$, then consider the Lipman semigroup with respect to $J$, $L(S,J)=\bigcup_{h\geq 1} (hJ-hJ)$. 
There is a smallest  $h_0\geq 1$ such that $L(S,J)=hJ-hJ$ whenever $h\geq h_0$, in which case we also have $(h+1)J=hJ+\mu(J)$ (see Proposition I.2.1 in \cite{Barucci}). 
Therefore, if $d\geq h_0$ then $z+(d+1)J=z+\mu(J)+dJ$ for $z\in \mathbb{Z}$ implying that  $z + (d+1)J\subseteq I$ and $z+dJ\not\subseteq I$ if and only if $(z+\mu(J))+dJ\subseteq I$ and $(z+\mu(J))+(d-1)J\not\subseteq I$. 
Consequently  $(I-(d+1)J)=(I-dJ)-\mu(J)$ so that  $PF^{d+1}(I,J)=(I-(d+1)J)\setminus (I-dJ)=((I-dJ)-\mu(J) )\setminus (I-dJ)$, which has cardinality $\mu(J)$. 
\end{proof}

This is not the only way to generalise the notion of pseudo-Frobenius. 
Let $S=\{0=s_0,s_1,\dots,s_n,\rightarrow\}$ be a numerical semigroup with conductor $s_n$. 
For $1\leq i \leq n$, consider the ideal $S_i=\{s\in S:s \geq s_i\}$, let $S(i)=S_i^*=(S-S_i)$ be its dual relative ideal, and let $T_i(S)=S(i)\setminus S(i-1)$.  
The \emph{type sequence} of a numerical semigroup $S$ is the finite sequence $(|T_i(S)|: 1\leq i \leq n)$~\cite{Barucci}.
Since $T_1=PF$ and $|PF|$ is the type of $S$, this is a generalisation of pseudo-Frobenius, which is different from the one in this article. 

Next we will give examples of how the sets $PF^d(I,J)$ can be used to understand small semigroups of linear patterns better. 



\begin{example} 
Let $S$ be a numerical semigroup. 
Then, by definition, $M(S)$ admits the pattern $p_d(X_1,\dots,X_n)=\sum_{i=1}^dX_i+a_0$ for any $a_0\in PF^d(S,M(S))$. 
Clearly the pattern $p_d$ induces the pattern $q_d(X_1)=dX_1+a_0$. 
Therefore $\{ p(X_1)=a_1X_1+a_0\in \mathbb{Z}[X_1]: a_1\geq 0, a_0 \in S \cup \bigcup_{i=1}^{a_1}PF^i(S,M(S))\}\subseteq \mathcal{P}_1^1(M(S))$.
\end{example}

\begin{example} Let $S$ be an ordinary numerical semigroup, so that $z\in S$ for all $z\in \mathbb{Z}$ such that $z\geq m(S)$. 
Then, if $q_n(X_1)=nX_1+a_0$ is a pattern of $S$, so that $nm(S)+a_0\in M(S)$, 
we have that $p_n(s_1,\dots,s_n)=\sum_{i=1}^ns_i+a_0 \geq \sum_{i=1}^nm(S)+a_0=nm(S)+a_0$ so that $p_n(s_1,\dots,s_n)\in M(S)$, 
implying that $p_n(X_1,\dots,X_n)=\sum_{i=1}^nX_i+a_0$ is also a pattern of $S$, and so $p_n$ and $q_n$ are equivalent. 
Therefore $\mathcal{P}_1^1(S) = \{ p(X_1)=a_1X_1+a_0\in \mathbb{Z}[X_1]: a_1\geq 0, a_0\in S\cup \bigcup_{i=1}^{a_1}PF^i(S,M(S))\}$.

\end{example}
Note that if $S$ is not ordinary, then in general it is not true that  $\mathcal{P}_1^1(S) = \{ p(X_1)=a_1X_1+a_0\in \mathbb{Z}[X_1]: a_1\geq 0, a_0\in S\cup \bigcup_{i=1}^{a_1}PF^i(S,M(S))\}$. 
For example, if $S=\left\langle 3,5\right\rangle$, then $2X_1-1$ is a pattern, but $-1\in PF^3(S,M(S))$, in particular $-1\not \in PF^i(S,M(S))$ for $i\leq 2$.

\section{Numerical semigroups as the image of other numerical semigroups under linear patterns}
\label{sec:image2}
We saw in Corollary~\ref{lemmahomsem} that if $p(X_1,\dots,X_n)=\sum_{i=1}^na_iX_i$ is a homogeneous pattern admitted by the numerical semigroup $S$ then $p(S)$ is a numerical semigroup if and only if  $\gcd(a_1,\dots,a_n)=1$. However, neither Theorem~\ref{thmhomideals} nor Corollary~\ref{lemmahomsem} say anything about the numerical semigroup $p(S)$. 
Clearly, any numerical semigroup is the image of some numerical semigroup under some pattern. Indeed, any numerical semigroup is the image of itself under the pattern $p(X)=X$. 
\begin{lemma}
Any numerical semigroup $S=\left\langle a_1,\dots,a_e\right\rangle$ is the image of $\mathbb{Z}_+$ under the homogeneous pattern $p(X_1,\dots,X_e)=a_1X_1+\sum_{i=2}^{e} (a_i-a_{i-1})X_i$. 
\end{lemma}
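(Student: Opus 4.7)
The plan is to evaluate $p$ on the non-increasing sequences of $\mathbb{Z}_+$ by an Abel-style telescoping that converts the differences $a_i-a_{i-1}$ back into the $a_i$'s, at the cost of replacing $x_i$ by the consecutive differences $x_i-x_{i+1}$. Concretely, I would introduce the change of variables
\[
y_i = x_i - x_{i+1} \text{ for } 1\leq i \leq e-1,\qquad y_e = x_e,
\]
and first observe that this map is a bijection between the set of non-increasing sequences $x_1 \geq x_2 \geq \cdots \geq x_e \geq 0$ of elements of $\mathbb{Z}_+$ and the set of arbitrary tuples $(y_1,\dots,y_e)\in \mathbb{Z}_+^{e}$. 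The inverse is $x_i = y_i + y_{i+1} + \cdots + y_e$, and non-increasingness of the $x_i$ together with $x_e\geq 0$ corresponds exactly to $y_i\geq 0$ for all $i$.

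Next I would rewrite the pattern by telescoping. Writing $a_0:=0$ for convenience, one has
\[
p(x_1,\dots,x_e) = a_1 x_1 + \sum_{i=2}^{e}(a_i-a_{i-1})x_i = \sum_{i=1}^{e-1} a_i(x_i - x_{i+1}) + a_e x_e = \sum_{i=1}^{e} a_i y_i,
\]
where the middle equality is a routine rearrangement (or Abel summation, as in the proof of Lemma~\ref{lemma:MPA}). Thus, as $(x_1,\dots,x_e)$ runs through all non-increasing sequences of $\mathbb{Z}_+$, the value $p(x_1,\dots,x_e)$ runs through all non-negative integer linear combinations $\sum_{i=1}^e a_i y_i$, which is by definition the set $\langle a_1,\dots,a_e\rangle = S$. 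In particular the image $p(\mathbb{Z}_+)$ equals $S$, which also confirms a posteriori that $p$ is admitted by $\mathbb{Z}_+$ (the image lies in $\mathbb{Z}_+$), as required for the statement to even make sense.

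I expect no substantive obstacle here: the entire argument is the telescoping identity together with the bijection between non-increasing tuples and the positive orthant via consecutive differences. The only point requiring a small amount of care is the implicit assumption that the generators are listed in increasing order $a_1 < a_2 < \cdots < a_e$, so that the coefficients $a_i - a_{i-1}$ are non-negative (otherwise the pattern would not even be admitted by $\mathbb{Z}_+$). I would state this ordering convention at the start of the proof and then the rest is immediate.
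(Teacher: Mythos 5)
Your argument is correct and rests on the same telescoping identity as the paper's proof. Where you differ is in how surjectivity is obtained: you observe that the consecutive-difference map $(x_1,\dots,x_e)\mapsto(x_1-x_2,\dots,x_{e-1}-x_e,x_e)$ is a bijection from non-increasing tuples in $\mathbb{Z}_+^e$ onto $\mathbb{Z}_+^e$, so that $p(\mathbb{Z}_+)=\{\sum_{i=1}^e a_iy_i:y_i\geq 0\}=\langle a_1,\dots,a_e\rangle$ in one stroke; the paper instead proves the two inclusions separately, deducing $p(\mathbb{Z}_+)\subseteq S$ from the telescoping, then exhibiting the sequences $(1,\dots,1,0,\dots,0)$ mapping to each generator $a_j$, and finally invoking linearity of $p$ to get closure under addition and hence $S\subseteq p(\mathbb{Z}_+)$. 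Your bijection subsumes all of this and is slightly cleaner.

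One correction to your closing remark, though: the ordering assumption you flag is not actually needed. The paper in fact takes the generators in \emph{non-increasing} order $a_1\geq\cdots\geq a_e$, so that there the coefficients $a_i-a_{i-1}$ of the pattern polynomial are non-positive, and yet $p$ is still admitted by $\mathbb{Z}_+$. What governs admissibility of a linear pattern is the non-negativity of the partial sums of its coefficients (cf.\ Lemma~\ref{lemma:MPA}), and here the $i$-th partial sum telescopes to $a_i>0$ no matter how the $a_i$ are ordered. Indeed, your own computation already shows $p(x)=\sum_i a_iy_i\geq 0$ for every non-increasing $x\in\mathbb{Z}_+^e$, independently of the signs of the individual coefficients $a_i-a_{i-1}$. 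So the worry that the pattern ``would not even be admitted by $\mathbb{Z}_+$'' without the increasing ordering does not arise, and no ordering convention is required for the statement to make sense.
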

\begin{proof}
Let $S=\left\langle a_1,\dots,a_e\right\rangle$ be a numerical semigroup, with $a_1\geq \cdots \geq a_e$ a (not necessarily minimal) set of generators of $S$. Let $p(X_1,\dots,X_e)=a_1X_1+\sum_{i=2}^{e} (a_i-a_{i-1})X_i$. Then for any non-increasing sequence $s_1,\dots,s_e\in \mathbb{Z}_+$ we have $p(s_1,\dots,s_e)= a_1s_1+\sum_{i=2}^{e}(a_i-a_{i-1})s_i= \sum_{i=1}^{e-1}a_i(s_i-s_{i+1})+a_es_e$ and since $s_i\geq s_{i+1}$ for all $i\in 1,\dots,e-1$ we have $s_i-s_{i+1}\geq 0$ so that $p(s_1,\dots,s_e)\geq 0$ and therefore $p$ is a homogeneous pattern admitted by $\mathbb{Z}_+$. Moreover, since $p(s_1,\dots,s_n)$ is of the form $\sum_{i=1}^ea_in_i$ with $n_i\geq 0$ we have that $p(s_1,\dots,s_e)\in \left\langle a_1,\dots,a_e\right\rangle=S$ so that $p(\mathbb{Z}_+)\subseteq S$. 
Now, for each generator $a_j$ of $S$, the non-increasing sequence $$s_1,\dots,s_e=\overbrace{1,\dots,1}^{j},\overbrace{0,\dots,0}^{e-j}$$ gives $p(s_1,\dots,s_n)=\sum_{i=1}^{j-1}a_i(1-1)+a_j(1-0)+\sum_{i=e-j}^{e-1}(0-0)+a_e0=a_j$, so that $a_j\in p(\mathbb{Z}_+)$.
Finally, since $p$ is linear, $p(x_1,\dots,x_e)+p(y_1,\dots,y_e)=p(x_1+y_1,\dots,x_e+y_e)$ for all sequences $x_1,\dots,x_e$ and $y_1,\dots,y_e$ in $\mathbb{Z}_+$. Note that if  $x_1,\dots,x_e$ and $y_1,\dots,y_e$ are non-increasing sequences of $\mathbb{Z}_+$ then so is $x_1+y_1,\dots,x_e+y_e$.  Therefore, for all $a,b\in p(\mathbb{Z}_+)$, also $a+b\in p(\mathbb{Z}_+)$. (Compare the proof of Lemma~\ref{lemmahomsem}.)
Consequently, we have  $S=\left\langle a_1,\dots,a_e\right\rangle \subseteq p(S)$, implying $S=p(\mathbb{Z}_+)$.
\end{proof}
Note that if the numerical semigroup $S$ is the image of a numerical semigroup $S'\supseteq S$ under a pattern $p$, then $S$ admits $p$. 
Therefore it is possible to consider the chain of numerical semigroups $S_0=S\supseteq S_1=p(S_0)\supseteq S_2=p(S_1)\supseteq \cdots$. 
Observe that $p(p(S))$ is not the same as $p\circ (p,\dots,p)(S)$ (see Section~\ref{sec:composition}) and that this chain is not the same as the chain of numerical semigroups obtained in the closure of a numerical semigroup (see Definition~\ref{def:closure} and \cite{MariaPedro}). Indeed, in the closure of a numerical semigroup $S$ under a pattern $p$, the pattern is not necessarily admitted by the numerical semigroup, or, more precisely,  it is only required that $p$ is admissible (i.e. admitted by some numerical semigroup) and that $S\subseteq p(S)$. Then $p$ is admitted by $S$ if and only if $S$ is the closure of $S$ under $p$, in which case $p(S)=S$. 

Now consider for a pattern $p$ admitted by a numerical semigroup $S$ the chain $S_0=S\supseteq S_1=p(S_0)\supseteq S_2=p(S_1)\supseteq \cdots$.  The chain either stabilizes to some numerical semigroup or it does not. If it stabilizes, then it does so at once, in which case $p$ is a surjective endopattern of $S$. If it does not stabilize, then we want to explore relations between the consecutive numerical semigroups in the chain. 
The next result gives such a relation, under special conditions and when the length of the pattern is two.    

The quotient of a numerical semigroup $S$ by a positive integer $d$ is the numerical semigroup $\frac{S}{d}=\{x\in \mathbb{Z}_+:dx\in S\}$~\cite{Rosalesbok}. 
\begin{lemma}
\label{lemmaquotient}
Let $S$ be a numerical semigroup and let $p(X_1,X_2)=a_1X_1+a_2X_2$ be a linear homogeneous pattern in two variables (not necessarily admitted by $S$) such that $a_1\in S$ and $\gcd(a_1,a_2)=1$. 
Then $S=\frac{p(S)}{a_1+a_2}$. 
\end{lemma}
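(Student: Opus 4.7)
The statement asserts equality of two subsets of $\mathbb{Z}_+$, so the plan is to prove the two inclusions separately. The inclusion $S\subseteq \frac{p(S)}{a_1+a_2}$ is nearly immediate: for any $x\in S$, the pair $(x,x)$ is a non-increasing sequence in $S$, and $p(x,x)=(a_1+a_2)x$ lies in $p(S)$, so $x$ lies in the quotient.

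The reverse inclusion $\frac{p(S)}{a_1+a_2}\subseteq S$ is where the hypotheses $\gcd(a_1,a_2)=1$ and $a_1\in S$ enter. Take $x\in \mathbb{Z}_+$ with $(a_1+a_2)x\in p(S)$, so that $(a_1+a_2)x=a_1s_1+a_2s_2$ for some $s_1\geq s_2$ in $S$. Rearranging gives
\[ a_1(s_1-x) \;=\; a_2(x-s_2). \]
Since $\gcd(a_1,a_2)=1$, $a_2$ divides $s_1-x$, so there is an integer $k$ with $s_1-x=a_2k$ and hence $x-s_2=a_1k$. The condition $s_1\geq s_2$ then becomes $(a_1+a_2)k\geq 0$; using $a_1+a_2>0$ (implicit in the definition of the quotient), this forces $k\geq 0$. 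Finally $x=s_2+k\,a_1$, and since $s_2\in S$, $a_1\in S$, and $S$ is closed under addition, $x\in S$.

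The main obstacle is really nothing more than correctly parameterising the integer solutions of the equation $a_1(s_1-x)=a_2(x-s_2)$ via the coprimality assumption and then reading off the sign of the parameter $k$ from the ordering $s_1\geq s_2$. The hypothesis $a_1\in S$ is used in only one place but is essential: it turns the expression $s_2+k\,a_1$ from a formal integer sum into an element of $S$. Without it one would only conclude $x\in \mathbb{Z}$, not $x\in S$. Similarly, $\gcd(a_1,a_2)=1$ is what guarantees that the divisibility relation extracts an integer $k$, so that both $s_1-x$ and $x-s_2$ are controlled by the same parameter.
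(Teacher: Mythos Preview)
Your proof is correct and follows essentially the same approach as the paper: both directions are argued the same way, with the reverse inclusion hinging on the identity $a_1(s_1-x)=a_2(x-s_2)$, coprimality, and the representation $x=s_2+k\,a_1$ with $k\ge 0$. Your extraction of the sign of $k$ directly from $s_1\ge s_2$ via $s_1-s_2=(a_1+a_2)k$ is in fact slightly cleaner than the paper's contradiction argument for $x\ge s_2$, but the underlying idea is identical.
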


\begin{proof}
For all $s\in S$ we have $p(s,s)=(a_1+a_2)s$, so that $S\subseteq \frac{p(S)}{a_1+a_2}$. 
Let $x\in \frac{p(S)}{a_1+a_2}$. Then there are $s_1,s_2\in S$ such that $p(s_1,s_2)=a_1s_1+a_2s_2=(a_1+a_2)x$, implying $a_1(s_1-x)=a_2(x-s_2)$. By assumption $\gcd(a_1,a_2)=1$, and so $a_1$ must divide $x-s_2$. 
Assume that $x<s_2$, then $a_1s_1+a_2s_2=(a_1+a_2)x<(a_1+a_2)s_2<a_1s_1+a_2s_2$, but that is impossible, and therefore $x\geq s_2$ and $x-s_2\geq 0$. Now since $a_1$ divides $x-s_2$ and $a_1,s_1\in S$, it follows that $x\in S$. Therefore $\frac{p(S)}{a_1+a_2}\subseteq S$.  
\end{proof} 

It was proved in~\cite{RosalesGarcia} that every numerical semigroup is one half of infinitely many symmetric numerical semigroups. This result was extended in~\cite{Swanson} to numerical semigroups that are the quotient of infinitely many symmetric numerical semigroups by an arbitrarily integer $d\geq 2$. 
The much weaker result that every numerical semigroup is one divided by $d$ of infinitely many numerical semigroups is easy to prove, just take $dS\cup\{ds+n:s\in S\}$ for distinct positive integers $n$ with $\gcd(n,d)=1$. 
However, we think that in light of Lemma~\ref{lemmaquotient}, it is interesting to see that if $S$ is a numerical semigroup, then the numerical semigroups $p(S)$ given by the linear homogeneous patterns  of length two admitted by $S$ of the form  $p(X_1,X_2)=a_1X_1+a_2X_2$,  with $a_1+a_2=d$, $a_1\in S$ and $\gcd(a_1,a_2)=1$ so that  $S=\frac{p(S)}{d}$, are all different.  
In other words, we let Lemma~\ref{lemmaquotient} imply that every numerical semigroup is the quotient of infinitely many numerical semigroups by an arbitrarily integer $d\geq 2$. 
\begin{corollary}
Let $d$ be an integer satisfying $d\geq 2$. Any numerical semigroup $S$ is the quotient from division by $d$ of infinitely many numerical semigroups of the form $p(S)$ where $p$ is a pattern of length two admitted by $S$. 
 More precisely, we have that $S=\frac{p(S)}{d}$ for all $p(X_1,X_2)=a_1X_1+a_2X_2$ such that $a_1+a_2=d$, $a_1\in S$ and $\gcd(a_1+a_2)=1$ . 
\end{corollary}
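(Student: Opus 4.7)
The plan is to produce an infinite family of patterns $p(X_1,X_2)=a_1X_1+(d-a_1)X_2$ satisfying the hypotheses of the corollary, apply Lemma~\ref{lemmaquotient} to each to obtain $S=p(S)/d$, and then argue that infinitely many of the resulting images $p(S)$ are pairwise distinct numerical semigroups.

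For the first step I would observe that $S$ has finite complement in $\mathbb{Z}_+$, so it contains all sufficiently large integers; among those, infinitely many are coprime to $d$ (for instance all integers of the form $kd+1$ with $k$ large enough). For each such $a_1\in S$ with $\gcd(a_1,d)=1$, set $a_2=d-a_1$, so that $a_1+a_2=d$ and $\gcd(a_1,a_2)=\gcd(a_1,d)=1$. The pattern $p(X_1,X_2)=a_1X_1+a_2X_2$ is admitted by $S$: rewriting $p(s_1,s_2)=a_1(s_1-s_2)+ds_2$ for $s_1\geq s_2$ in $S$ exhibits $p(s_1,s_2)$ as a sum of a non-negative multiple of $a_1\in S$ and a non-negative multiple of $s_2\in S$, hence as an element of $S$. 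Lemma~\ref{lemmaquotient} then yields $S=p(S)/d$ for each such $p$.

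The main obstacle is to show that the images $p(S)$ realise infinitely many distinct numerical semigroups; this is where the coprimality hypothesis $\gcd(a_1,d)=1$ will enter essentially. The distinguishing element I would use is $a_1^2=p(a_1,0)\in p(S)$. I would prove that whenever $a_1,a_1'$ are two valid choices with $a_1'>a_1^2$, the element $a_1^2$ cannot lie in $p_{a_1'}(S)$: indeed, if $a_1^2=a_1'(s_1-s_2)+ds_2$ for some $s_1\geq s_2$ in $S$, then either $s_1=s_2$, forcing $ds_2=a_1^2$ and contradicting $\gcd(a_1^2,d)=1$ together with $d\geq 2$, or $s_1>s_2$, giving $a_1'\leq a_1'(s_1-s_2)\leq a_1^2$ and contradicting $a_1'>a_1^2$.

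Finally, I would inductively select a strictly increasing sequence $a_1^{(1)}<a_1^{(2)}<\cdots$ of admissible choices with $a_1^{(k+1)}>(a_1^{(k)})^2$, which is possible by the first step. The distinguishing argument above then ensures that the semigroups $p_{a_1^{(k)}}(S)$ are pairwise distinct and each satisfies $S=p_{a_1^{(k)}}(S)/d$, producing the required infinite family.
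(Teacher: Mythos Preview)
Your proof is correct and runs parallel to the paper's up to the point of establishing $S=p(S)/d$: both produce infinitely many $a_1\in S$ coprime to $d$, verify admissibility of $p(X_1,X_2)=a_1X_1+(d-a_1)X_2$ via the rewriting $p(s_1,s_2)=a_1(s_1-s_2)+ds_2$, and appeal to Lemma~\ref{lemmaquotient}. The genuine difference is in the distinctness argument. The paper parametrises the patterns as $q_k$ and takes as distinguishing element $t_k$, the \emph{smallest} member of $q_k(S)$ not divisible by $d$; it then shows that if $t_k$ belonged to $q_{k'}(S)$ for some $k'>k$ one could manufacture a strictly smaller element of $q_k(S)$ not divisible by $d$, contradicting minimality. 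You instead exhibit the explicit witness $a_1^2=p(a_1,0)$ and argue directly, via the size bound $a_1'>a_1^2$ and the coprimality $\gcd(a_1^2,d)=1$, that it cannot appear in $p_{a_1'}(S)$. Your route is more elementary and concrete---no minimality bookkeeping, and the witness is written down rather than characterised---at the modest cost of only separating the images along a sparse subsequence $a_1^{(k+1)}>(a_1^{(k)})^2$, whereas the paper separates $q_k(S)$ from $q_{k'}(S)$ for \emph{every} admissible $k'>k$. Since the corollary only asks for infinitely many distinct images, your argument suffices.
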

\begin{proof}
By Lemma~\ref{lemmaquotient}, any numerical semigroup $S$ is the quotient from division by $d$ of the numerical semigroup obtained as the image of $S$ by any pattern of the form $p(X_1,X_2)=a_1X_1+a_2X_2$ with $a_1+a_2=d$, $a_1\in S$ and $\gcd(a_1+a_2)=1$. 
(Note that since $a_1+a_2\geq 0$ and $\gcd(a_1,a_2)=1$, any pattern of a numerical semigroup of this form satisfies $d=a_1+a_2\geq 1$. )
There is only a finite number of pairs $(a_1,a_2)$ with $a_1,a_2>0$ and $a_1+a_2=d$, but there are infinitely many pairs $(a_1,a_2)$ with $a_1>0$, $a_2<0$, $a_1\in S$, $\gcd(a_1,a_2)=1$ and $a_1+a_2=d$. 
Let $\alpha_1$ be the smallest $a_1$ such that there is an $a_2<0$ with $a_1+a_2=d$ and let $\alpha_2=d-\alpha_1$. Then $\alpha_1=d+1$ and $\alpha_2=1$. The other pairs $(a_1,a_2)$ with $a_1>0$, $a_2<0$ and $a_1+a_2=d$ are obtained as $(a_1,a_2)=(\alpha_1+k,\alpha_2-k)$ with $k\in \mathbb{Z}_+$. 
Note that not all these pairs $(a_1,a_2)=(\alpha_1+k,\alpha_2-k)$ satisfy $\gcd(a_1,a_2)=1$. More precisely, $\gcd(a_1,a_2)=1$ if and only if $\gcd(a_1,d)=1$. Indeed, any factor of $a_1$ divides $d=a_1+a_2$ if and only if it divides $a_2$. 

Let $q_k(X_1,X_2)=(\alpha_1+k)X_1+(\alpha_2-k)X_2$. 
Clearly the set $D=\{ds: s\in S\}=\{q_k(s,s):s\in S\}\subseteq q_k(S)$ for all $k\in \mathbb{Z}_+$. 
Therefore, if $q_k(S)\neq q_{k'}(S)$, then they differ in the elements outside $D$. 
The elements in $q_k(S)\setminus D$ are of the form $q_k(s_1,s_2)$ with $s_1>s_2$, so that $s_1-s_2>0$. 
Therefore, for any $k,k'\in \mathbb{Z}_+$ with $k'>k$ we have $q_{k'}(s_1,s_2)=(\alpha_1+k')s_1+(\alpha_2-k')s_2=\alpha_1s_1+\alpha_2s_2+k'(s_1-s_2)>\alpha_1s_1+\alpha_2s_2 + k(s_1-s_2)=(\alpha_1+k)s_1+(\alpha_2-k)s_2=q_{k}(s_1,s_2)$.

Now assume that $\gcd(\alpha_1+k,d)=1$ (so that $\gcd(\alpha_1+k,\alpha_2-k)=1$ and $q_k(S)$ is a numerical semigroup). 
Let $t_{k}=(\alpha_1+k)s_1+(\alpha_2-k)s_2$ be the smallest element in $q_{k}(S)$ which is not of the form $dn$ for $n\in\mathbb{Z}_+$, that is, the smallest element in $q_{k}(S)$ which is not divisible with $d=\alpha_1+\alpha_2$. 
(Note that we proved in Lemma~\ref{lemmaquotient} that if $dn\in q_{k}(S)$, then $n\in S$ so that $dn=q_{k}(n,n)$. )
Suppose that for some $k'>k$ we have $t_{k}\in q_{k'}(S)$. 
Then there are $s'_1, s'_2\in S$ such that $t_{k}=q_{k'}(s'_1,s'_2)=(\alpha_1+k')s'_1+(\alpha_2-k')s'_2=(\alpha_1+k)s'_1+(\alpha_2-k)s'_2+(k'-k)(s'_1-s'_2)$. 
But $(\alpha_1+k)s'_1+(\alpha_2-k)s'_2\in q_{k}(S)$ and since $k'-k>0$ and $s'_1-s'_2>0$, we have $(\alpha_1+k)s'_1+(\alpha_2-k)s'_2<t_{k}$. 
Now $t_{k}$ is the smallest element in $q_{k}(S)$ not divisible by $d$, so $d$ divides $(\alpha_1+k)s'_1+(\alpha_2-k)s'_2$. 
We have $(\alpha_1+k)s'_1+(\alpha_2-k)s'_2=(\alpha_1+k)(s'_1-s'_2)+ (\alpha_1+k+\alpha_2 - k)s'_2=(\alpha_1+k)(s'_1-s'_2)+ ds'_2$. 
By assumption  $\gcd(d,\alpha_1+k)=1$,  so that $d$ divides  $s'_1-s'_2$. 
But then $d$ divides $t_{k}$, however by definition $d$ does not divide $t_{k}$, and we have a contradiction. 
Therefore $t_k\not\in q_{k'}(S)$ for $k'>k$, implying that $q_{k}(S)\neq q_{k'}(S)$ and the result follows. 
\end{proof}

\section{Composition of patterns}
\label{sec:composition}
Let $I$, $J$ and $K$ be three ideals of three numerical semigroups. Also, for $i\in 1,\dots,n'$, let  $q_i:\mathcal{S}_{n}(I)\rightarrow J$ be a pattern sending non-increasing sequences of length $n$ of elements in $I$ to elements in $J$ and let $p:\mathcal{S}_{n'}(J)\rightarrow K$ be a pattern sending non-increasing sequences of length $n'$ of elements in $J$ to elements in $K$. 
Define the polynomial composition of the patterns $p$ and $q_1,\dots,q_{n'}$  as $p\circ (q_1,\dots,q_{n'})=p(q_1(X_1,\dots,X_{n}),\dots,q_{n'}(X_1,\dots,X_{n}))$. 

Polynomial composition of patterns requires more than composition of polynomials for being well-defined. 
\begin{lemma}
\label{lemmacomposition}
The composition $p\circ (q_1,\dots,q_n')$ of the patterns $p:\mathcal{S}_{n'}(J)\rightarrow K$ and $q_1,\dots,q_n':\mathcal{S}_{n}(I)\rightarrow J$ is well-defined if the image of the $q_i$ is contained in the domain of $p$ and  $q_1(s_1,\dots,s_{n})\geq \cdots \geq q_{n'}(s_1,\dots,s_{n})$ for any non-increasing sequence $(s_1,\dots,s_{n})\in \mathcal{S}_{n}(I)$. 
\end{lemma}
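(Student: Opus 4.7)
The plan is to unpack the definition of ``pattern admitted by an ideal'' and verify that the composite polynomial returns an element of $K$ whenever it is evaluated on a non-increasing sequence of elements of $I$. First I would observe that $p\circ (q_1,\dots,q_{n'})$ is automatically a multivariate polynomial in $X_1,\dots,X_n$ with integer coefficients, since the polynomial composition of polynomials with integer coefficients is again such a polynomial; so the only issue is whether the polynomial function $p\circ (q_1,\dots,q_{n'})$ maps $\mathcal{S}_n(I)$ into $K$.

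Next I would fix a non-increasing sequence $(s_1,\dots,s_n)\in \mathcal{S}_n(I)$ and proceed in two steps. First, because each $q_i$ is a pattern from $\mathcal{S}_n(I)$ to $J$, we have $q_i(s_1,\dots,s_n)\in J$ for every $i$. Second, by hypothesis we have
\[
q_1(s_1,\dots,s_n)\geq q_2(s_1,\dots,s_n)\geq \cdots \geq q_{n'}(s_1,\dots,s_n),
\]
so the tuple $(q_1(s_1,\dots,s_n),\dots,q_{n'}(s_1,\dots,s_n))$ is a non-increasing sequence in $J$, that is, an element of $\mathcal{S}_{n'}(J)$, which is the domain of $p$. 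Since $p$ is a pattern sending $\mathcal{S}_{n'}(J)$ into $K$, evaluating $p$ on this tuple gives an element of $K$, which is by definition $(p\circ (q_1,\dots,q_{n'}))(s_1,\dots,s_n)$.

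Combining these two steps, I conclude that $p\circ (q_1,\dots,q_{n'})$ is a polynomial function from $\mathcal{S}_n(I)$ to $K$ and therefore a well-defined pattern. There is no real obstacle here; the only point that requires care is the second hypothesis, which is precisely what guarantees that the image tuple lies in $\mathcal{S}_{n'}(J)$ rather than merely in $J^{n'}$. Without that ordering condition the composition of the polynomials is still defined as a polynomial, but it need not satisfy the defining property of a pattern, and this is exactly the subtlety that distinguishes polynomial composition of patterns from plain polynomial composition.
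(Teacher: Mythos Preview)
Your proof is correct and follows the same approach as the paper, which simply says ``Clear from the definition of pattern.'' You have merely unpacked that definition explicitly, verifying that the hypotheses ensure the tuple $(q_1(s_1,\dots,s_n),\dots,q_{n'}(s_1,\dots,s_n))$ lies in $\mathcal{S}_{n'}(J)$ so that $p$ can be applied.
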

\begin{proof}
Clear from the definition of pattern.
\end{proof}

\begin{example}
If $S$ is Arf, then $S$ admits $p_A(X_1,X_2,X_3)=X_1+X_2-X_3$. 
From Lemma~\ref{lemmacomposition} we know that if $p$ is a pattern of length $n$ admitted by $S$ and $q_1,\dots,q_{n}$ are patterns of length $n$ admitted by $S$ then $p\circ (q_1,\dots,q_{n})$ is also admitted by $S$, whenever that composition is well-defined.  
Therefore, since $Y_1$ and $Y_2$ are patterns, and assuming $Y_1\geq Y_2$, we can make the change of variables $X_1=X_2=Y_1$ and $X_3=Y_2$ and  we see that $q(X_1,X_2)=p_A(X_1,X_1,X_2)=2X_1-X_2$ is admitted by $S$. 
In other words, $p_A(X_1,X_2,X_3)$ induces $q(X_1,X_2)$. 
Actually, as was proved in \cite{Campillo}, it turns out that $q$ also induces $p_A$, and so $q$ and $p_A$ are equivalent. 

With other changes of variables we can obtain for example,
with $a,b,c\in\mathbb{Z}$ and $a\geq b\geq c\geq 0$,
\begin{itemize}
\item $X_1=aY_1+aY_2$
\item $X_2=bY_1+bY_2-bY_3$
\item $X_3=cY_3$
\end{itemize}
we get $p_A(X_1,X_2,X_3)=(aY_1+aY_2)+(bY_1+bY_2-bY_3)-cY_3=(a+b)Y_1+(a+b)Y_2-(b+c)Y_3)$. 
With $a=b=1$ that gives $p_A(X_1,X_2,X_3)=2p_A(Y_1,Y_2,Y_3)$ and with $a=2$, $b=c=1$ it gives $p_A(X_1,X_2,X_3)=3Y_1+3Y_2-2Y_3$. 
If instead, with $a,b,c\in\mathbb{Z}$, $a\geq b\geq c\geq 0$ we define
\begin{itemize}
\item $X_1=aY_1+aY_2-aY_3$
\item $X_2=bY_1+bY_2-bY_3$
\item $X_3=cY_3$
\end{itemize}
then we get $p_A(X_1,X_2,X_3)=(aY_1+aY_2-aY_3)+(bY_1+bY_2-bY_3)-cY_3=(a+b)Y_1+(a+b)Y_2-(a+b+c)Y_3$ which with $a=b=c=1$ gives $p_A(X_1,X_2,X_3)=2Y_1+2Y_2-3Y_3$. 
\end{example}


\section*{Acknowledgements}
My gratitute goes to Ralf Fr\"oberg and Christian Gottlieb without whom this article would not have been written, and for their extensive and invaluable help during its elaboration. I also want to thank Pedro Garc\'ia-S\'anchez for pointing out that the patterns in Lemma \ref{lemma:alg}, for which the partial sums of the coefficients are larger than one, are exactly the strongly admissible patterns. 
The author acknowledges partial support from the
Spanish MEC project ICWT (TIN2012-32757) and ARES (CONSOLIDER INGENIO 2010
CSD2007-00004).

\end{document}